\newcommand{\ZZ}{\mathbb{Z}}
\newcommand{\QQ}{\mathbb{Q}}
\newcommand{\RR}{\mathbb{R}}
\newcommand{\CC}{\mathbb{C}}
\newcommand{\aA}{\mathbb{A}}
\newcommand{\cH}{\mathcal{H}}
\newcommand{\cW}{\mathcal{W}}
\newcommand{\cB}{\mathcal{B}}
\newcommand{\cD}{\mathcal{D}}
\newcommand{\cY}{\mathcal{Y}}
\newcommand{\eps}{\varepsilon}
\newcommand{\bs}{\backslash}
\renewcommand{\geq}{\geqslant}
\renewcommand{\leq}{\leqslant}
\newcommand{\GL}{\mathrm{GL}}
\newcommand{\PGL}{\mathrm{PGL}}
\newcommand{\SL}{\mathrm{SL}}
\renewcommand{\O}{\mathrm{O}}
\newcommand{\Z}{\mathrm{Z}}
\newcommand{\U}{\mathrm{U}}
\newcommand{\M}{\mathrm{M}}
\newcommand{\ov}[1]{\overline{#1}}
\DeclareMathOperator{\Ad}{Ad}
\DeclareMathOperator{\diag}{diag}
\DeclareMathOperator{\vol}{vol}
\DeclareMathOperator{\height}{ht}
\theoremstyle{plain}
\newtheorem{lemma}{Lemma}
\newtheorem{theorem}{Theorem}
\theoremstyle{remark}
\newtheorem{remark}{Remark}
\theoremstyle{definition}
\newtheorem*{acknowledgement}{Acknowledgements}
\begin{document}

\author{Valentin Blomer}
\author{Gergely Harcos}
\author{P\'eter Maga}

\address{Mathematisches Institut, Endenicher Allee 60, 53115 Bonn, Germany}\email{blomer@math.uni-bonn.de}
\address{Alfr\'ed R\'enyi Institute of Mathematics, Hungarian Academy of Sciences, POB 127, Budapest H-1364, Hungary}\email{gharcos@renyi.hu, magapeter@gmail.com}
\address{MTA R\'enyi Int\'ezet Lend\"ulet Automorphic Research Group}\email{gharcos@renyi.hu, magapeter@gmail.com}
\address{Central European University, Nador u. 9, Budapest H-1051, Hungary}\email{harcosg@ceu.edu}

\title{Analytic properties of spherical cusp forms on $\GL(n)$}

\dedicatory{Dedicated to Dorian Goldfeld on the occasion of his seventy-first birthday}

\begin{abstract}
Let $\phi$ be an $L^2$-normalized spherical vector in an everywhere unramified cuspidal automorphic representation of $\PGL_n$ over $\QQ$ with  Laplace eigenvalue $\lambda_{\phi}$. We establish explicit estimates for various quantities related to $\phi$ that are uniform in $\lambda_{\phi}$. This includes uniform bounds for spherical Whittaker functions on $\GL_n(\RR)$, uniform bounds for the global sup-norm of $\phi$, and uniform bounds for the ``essential support'' of $\phi$, i.e.\ the region outside which it decays exponentially. The proofs combine analytic and arithmetic tools.
\end{abstract}

\subjclass[2010]{Primary 11F72, 11F55; Secondary 11H06, 33E30, 43A85}

\keywords{cusp forms, global sup-norm, Whittaker functions, pre-trace formula, asymptotic analysis, geometry of numbers}

\thanks{First author partially supported by the DFG-SNF lead agency program grant BL~915/2-2. Second and third author supported by NKFIH (National Research, Development and Innovation Office) grants NK~104183, ERC\underline{\phantom{ }}HU\underline{\phantom{ }}15~118946, K~119528, and by the MTA R\'enyi Int\'ezet Lend\"ulet Automorphic Research Group. Second author also supported by ERC grant AdG-321104, and third author also supported by the Premium Postdoctoral Fellowship of the Hungarian Academy of Sciences.}

\maketitle

\section{Introduction}

Classical modular forms for congruence subgroups of $\SL_2(\ZZ)$ have a long tradition in many branches of mathematics, in particular number theory. The familiar framework of $2$-by-$2$ matrices and the corresponding symmetric space of the Poincar\'e upper plane is amenable to concrete computations and explicit formulae. For instance, depending on the choice of coordinates, the eigenfunctions of the Laplace operator can be expressed in terms of Bessel functions or hypergeometric functions that have been studied extensively and are, by and large, well-understood.

The analytic picture changes completely for automorphic forms on higher rank groups, where the complexity increases so drastically that explicit results suitable for the purpose of analytic number theory often remain elusive. In this work, we focus on cusp forms for the group $\PGL_n$ over $\QQ$ that come with local Langlands parameters at each place of $\QQ$. We keep the cusp form unramified (spherical) at all places, but single out the archimedean place and investigate the analytic properties as the maximal archimedean Langlands parameter (or equivalently the Laplace eigenvalue) grows. Thus our key players are the real-analytic functions $\phi$ on the non-compact locally symmetric space
\[X_n:=\GL_n(\ZZ)\Z_n(\RR)\bs\GL_n(\RR)/\O_n(\RR)\]
that are of moderate growth, eigenfunctions of the commuting family $\cD_n$ of all invariant differential operators on $X_n$, and satisfy the cuspidality condition
\begin{equation}\label{cusp}
\int_{\U(\ZZ)\bs\U(\RR)} \phi(u g) \, du = 0,\qquad g\in\GL_n(\RR),
\end{equation}
for all unipotent block upper triangular subgroups $\U$ of $\GL_n$ (cf.\ \cite[Def.~5.1.3]{G}). Here $\Z_n(\RR)$ denotes the center of $\GL_n(\RR)$, and $\O_n(\RR)$ denotes the orthogonal subgroup, while $\U_n(\RR)$ will be reserved for the subgroup of unipotent upper triangular matrices. A particular element of $\cD_n$ is the Laplace operator on $X_n$, and we denote the corresponding eigenvalue of $\phi$ by $\lambda_\phi$.

These eigenfunctions are the  building blocks of the cuspidal spectrum  of $L^2(X_n)$, and therefore they are of central importance in analysis. Being spherical vectors of  cuspidal  automorphic representations, cusp forms $\phi$ are in addition eigenfunctions of the global Hecke algebra, but for most of time we do not assume this extra property. (It is not unreasonable to conjecture that the eigenspaces of $\cD_n$ are one-dimensional so that the Hecke property is automatic, but our main results hold more generally for arbitrary eigenfunctions of $\cD_n$.)

The focus of this paper is on explicit estimates for various quantities related to $\phi$ that are central in the analytic theory of automorphic forms. We think of $n$ as fixed but potentially very large, and we emphasize that all estimates are \emph{uniform} as $\lambda_{\phi} \rightarrow \infty$. This includes
\begin{itemize}
\item uniform bounds for spherical Jacquet--Whittaker functions on $\GL_n(\RR)$;
\item uniform bounds for the ``essential support'' of $\phi$, i.e.\ the region outside which it decays exponentially;
\item uniform bounds for the global sup-norm of $\phi$, i.e.\ an upper bound for $\|\phi\|_{\infty}/\|\phi\|_2$ in terms of $\lambda_{\phi}$.
\end{itemize}
We proceed to discuss these points in more detail.

\subsection{Jacquet--Whittaker functions} The (standard archimedean spherical) Jacquet--Whittaker function $\cW_{\mu}$ on $\GL_n(\RR)$ associated with a cusp form $\phi$ is indexed by the (archimedean) Langlands parameters
 \begin{equation}\label{mueq4}
\mu=(\mu_1,\dots,\mu_n)\in\CC^n\qquad\text{with}\qquad\mu_1+\dots+\mu_n=0,
\end{equation}
so that in the tempered case (which by the generalized Ramanujan--Selberg conjecture should always be the case)
\begin{equation}\label{mueq1}
\mu=(\mu_1,\dots,\mu_n)\in(i\RR)^n\qquad\text{with}\qquad\mu_1+\dots+\mu_n=0.
\end{equation}
These parameters are only defined up to a permutation, and for convenience we order them to satisfy
\begin{equation}\label{mueq2}
\Im\mu_1\geq\dots\geq\Im\mu_n.
\end{equation}
In the non-tempered case we have the following weaker versions of \eqref{mueq1}:
\begin{equation}\label{mueq6}
\max\bigl(|\Re\mu_1|,\dots,|\Re\mu_n|\bigr)\leq\frac{1}{2}-\frac{1}{n^2+1},
\end{equation}
which is a celebrated result of Luo--Rudnick--Sarnak \cite[Thm.~1.2]{LRS}, and
\begin{equation}\label{mueq5}
\ov{\mu}=(\ov\mu_1,\ldots,\ov\mu_n)\qquad\text{is a permutation of}\qquad -\mu=(-\mu_1,\dots,-\mu_n),
\end{equation}
which reflects that the cuspidal representation of $\GL_n(\RR)$ generated by $\phi$ is unitary.

As the Laplace eigenvalue equals (cf.~\cite[Section~6]{M})
\[\lambda_\phi=\frac{n^3-n}{24}-\frac{\mu_1^2+\dots+\mu_n^2}{2},\]
it will be convenient for us to write
\begin{equation}\label{Tdef}
T_\mu:=\max(2,|\mu_1|,\dots,|\mu_n|)\asymp_n\lambda_\phi^{1/2}.
\end{equation}
We shall sometimes refer to Langlands parameters satisfying
\begin{equation}\label{generic}
|\mu_i-\mu_j|\gg T_\mu\qquad\text{for all}\qquad 1\leq i<j\leq n.
\end{equation}

The special function $\cW_\mu$ participates in the Fourier--Whittaker decomposition of $\phi$: it is invariant under $\Z_n(\RR)$, right-invariant under $\O_n(\RR)$, and transforms by a character under the left-action of $\U_n(\RR)$. Moreover, and crucially, $\cW_\mu$ is an eigenfunction of $\cD_n$ with the same eigenvalues as $\phi$. We view it as a center-invariant function on the positive diagonal torus with a particular $L^2$-normalization. For instance, in the tempered case \eqref{mueq1} we have
\begin{equation}\label{normalization}
\int_{(\RR_{>0})^{n-1}}\left|\cW_\mu(\diag(t_1,\dots,t_{n-1},1))\right|^2\,\prod_{j=1}^{n-1}\frac{dt_j}{t_j^{n+1-2j}}=
\frac{2^{1-n}\pi^{n/2}}{\Gamma(n/2)}.
\end{equation}
In the case $n=2$ the Jacquet--Whittaker function is essentially a $K$-Bessel function
\begin{equation}\label{WK}
\cW_{(\nu,-\nu)}(\diag(y,1))=\frac{2\pi^{1/2+\nu}\sqrt{y}K_{\nu}(2\pi y)}{\Gamma(1/2+\nu)}.
\end{equation}
In general we do not have explicit formulae for Jacquet--Whittaker functions, but due to Stade~\cite{S1} we can describe them recursively as iterated integrals of $K$-Bessel functions. This harmonizes with the fact that $\GL_n$ Kloosterman sums for the long Weyl element decompose into a product of (possibly degenerate) Kloosterman sums of smaller rank \cite[Cor.~3.11]{Stev}. Our first result captures, in a uniform fashion, the decay at zero and infinity of $\cW_\mu$.

\begin{theorem}\label{thm1} Let $t=\diag(t_1,\ldots,t_n)\in\GL_n(\RR)$ with $t_1,\dots,t_n>0$. Assume that the Langlands parameters satisfy \eqref{mueq1}--\eqref{mueq2}. Then for any $\eps>0$, we have
\begin{equation}\label{Wbound}
\cW_{\mu}(t)\ll_{n,\eps}C_{\mu,\eps}\left(\prod_{j=1}^n t_j^{n+1-2j}\right)^{1/2-\eps}
\exp\left(-\frac{1}{T_\mu}\sum_{j=1}^{n-1}\frac{t_j}{t_{j+1}}\right),
\end{equation}
where
\begin{equation}\label{Cdef}
C_{\mu,\eps}:=\prod_{1\leq j\leq n/2}|1+\mu_j-\mu_{n+1-j}|^{-(n+1-2j)/3+(n+1-2j)^2\eps}.
\end{equation}
In particular,
\begin{equation}\label{Wbound3}
\|\cW_{\mu}\|_{\infty}\ll_{n,\eps}C_{\mu,\eps}\,T_\mu^{(n^3-n)/12}.
\end{equation}
\end{theorem}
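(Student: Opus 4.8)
The plan is to prove the pointwise bound \eqref{Wbound} and then derive the sup-norm bound \eqref{Wbound3} as an immediate corollary by optimizing over $t$. Let me start with the harder, pointwise statement.

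First I would set up Stade's recursive formula \cite{S1}, which expresses $\cW_\mu$ on $\GL_n(\RR)$ as an iterated integral involving a $K$-Bessel kernel and $\cW_{\mu'}$ for a rank-$(n-1)$ parameter $\mu'$. The natural strategy is induction on $n$: the base case $n=2$ is the explicit formula \eqref{WK}, where classical uniform bounds for $K_\nu(x)$ (e.g.\ exponential decay for $x \gg |\nu|$ and the power-law/transition behaviour for $x \ll |\nu|$) give exactly the shape \eqref{Wbound}, including the $C_{\mu,\eps}$ factor coming from the $\Gamma(1/2+\nu)^{-1}$ normalization and the $|1+2\nu|^{-1/3}$ gain from the Airy-type transition region of the Bessel function near its turning point. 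The inductive step then feeds the bound for $\cW_{\mu'}$ into Stade's integral, and one must bound the resulting convolution integral in the $t_j/t_{j+1}$ variables. The archimedean $K$-Bessel kernel provides the missing exponential factor $\exp(-T_\mu^{-1} t_{n-1}/t_n)$ (or whichever ratio is introduced at this step) together with one more copy of the $C$-type gain and the power $t_j^{n+1-2j}$ with the correct exponent; the tempered hypothesis \eqref{mueq1}–\eqref{mueq2} keeps everything on the critical line so that the exponents $\tfrac12-\eps$ are sharp and the integrals converge. Some care is needed to make the constant $C_{\mu,\eps}$ propagate correctly: the factor at step $j$ of the recursion involves the ``outer'' parameters, and matching the product in \eqref{Cdef} — indexed by $1\le j\le n/2$ with exponent $-(n+1-2j)/3$ — requires tracking which pair $\mu_j,\mu_{n+1-j}$ controls the $j$-th Bessel transition. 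This bookkeeping, and the need to keep all implied constants depending only on $n$ and $\eps$ (never on $\mu$) through an $n$-fold iteration, is where I expect the main technical effort to lie.

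For the exponential factor, the key point is that in each Stade integral the $K$-Bessel function $K_{\mu_i-\mu_j}(c\,t_k/t_{k+1})$ decays like $\exp(-c\,t_k/t_{k+1})$ once $t_k/t_{k+1} \gg T_\mu$, and like a bounded power otherwise; combining these across all $n-1$ consecutive ratios and absorbing the transition regions into the polynomial prefactor yields $\exp(-T_\mu^{-1}\sum_j t_j/t_{j+1})$ with the harmless constant $1$ in the exponent (any fixed constant would do, and we are free to shrink it). I would also invoke the normalization \eqref{normalization} only implicitly, since Stade's recursion already carries the correct $L^2$-normalization forward; alternatively one checks consistency of the constants at the end against \eqref{normalization}.

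Finally, \eqref{Wbound3} follows by taking the supremum of the right-hand side of \eqref{Wbound} over $t_1,\dots,t_n>0$. Writing $u_j := t_j/t_{j+1}$ for $1\le j\le n-1$, the product $\prod_{j=1}^n t_j^{n+1-2j}$ becomes (up to the center-invariance normalization $t_n=1$) a monomial $\prod_{j=1}^{n-1} u_j^{e_j}$ with $e_j = \sum_{k\le j}(n+1-2k) = j(n-j) > 0$. Each factor $u_j^{(1/2-\eps)e_j}\exp(-u_j/T_\mu)$ is maximized at $u_j \asymp_n T_\mu$, contributing $\ll_n (T_\mu)^{(1/2-\eps)e_j}$ up to the $\eps$-loss; multiplying over $j$ gives the exponent $(1/2-\eps)\sum_{j=1}^{n-1} j(n-j) = (1/2-\eps)\cdot\frac{n^3-n}{6} = \frac{n^3-n}{12} - \eps\cdot\frac{n^3-n}{6}$, and the $\eps$-dependent part is absorbed into the $\ll_{n,\eps}$ by renaming $\eps$. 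This produces $C_{\mu,\eps}\,T_\mu^{(n^3-n)/12}$ as claimed.
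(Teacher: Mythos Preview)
Your high-level plan---induction via Stade's recursion combined with uniform $K$-Bessel bounds, then optimization in $t$ for \eqref{Wbound3}---matches the paper. Your derivation of \eqref{Wbound3} from \eqref{Wbound} is essentially the paper's argument. But your description of the recursion itself is off in a way that would make the bookkeeping fail.

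Stade's recursion \cite{S1,S2} does \emph{not} go from $\GL_{n-1}$ to $\GL_n$; it goes from $\GL_{n-2}$ to $\GL_n$. The shorter parameter $\mu'$ is obtained by deleting the \emph{outermost pair} $(\mu_1,\mu_n)$ (and recentering), so $\mu'\in\CC^{n-2}$. At each step of the recursion one picks up not a single Bessel factor but a product of $n-1$ Bessel functions, \emph{all} with the same index $(\mu_1-\mu_n)/2$, one for each ratio $y_i=t_i/t_{i+1}$. This product supplies the entire exponential $\prod_{i=1}^{n-1}\exp(-y_i/T_\mu)$ in one shot (the induction hypothesis contributes no exponential factors that survive), and it contributes the single factor
\[
|1+\mu_1-\mu_n|^{-(n-1)/3+(n-1)^2\eps},
\]
which is precisely the $j=1$ term of $C_{\mu,\eps}$. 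The remaining factors $j=2,\dots,\lfloor n/2\rfloor$ of $C_{\mu,\eps}$ come from $C_{\mu',\eps}$ via the induction hypothesis. This is why $C_{\mu,\eps}$ is indexed by symmetric pairs $(\mu_j,\mu_{n+1-j})$ with exponent $-(n+1-2j)/3$: each pair is peeled off at one step of an $n\to n-2$ descent, and the factor $(n+1-2j)$ is the number of Bessel functions at that step. An $n\to n-1$ scheme would not produce this structure.

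A second point you do not anticipate: to make the gamma factors from \eqref{Wdef2} cancel correctly, the paper uses the observation
\[
\prod_{i=1}^{n-1}\Gamma_\RR(1+\mu_1-\mu_n)\asymp_n\prod_{\substack{1\leq j<k\leq n\\ j=1\ \text{or}\ k=n}}\Gamma_\RR(1+\mu_j-\mu_k),
\]
valid under \eqref{mueq1}--\eqref{mueq2} by Stirling. This identifies the $(n-1)$-fold Bessel normalization with exactly the ``outer shell'' of gamma factors, leaving the inner block $\prod_{2\leq j<k\leq n-1}\Gamma_\RR(1+\mu_j-\mu_k)$ to be absorbed by the induction hypothesis. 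In practice one works with the completed, renormalized function $W_\mu^*$ of \eqref{Wdef3} to make this transparent; the target inequality is recast as \eqref{rewrite}, and the final $u$-integral in \eqref{laststep} splits and converges thanks to the $\eps$-perturbed exponents. Once you correct the rank of the recursion and incorporate this gamma-factor matching, your outline becomes the paper's proof.
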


\begin{remark} The somewhat complicated definition \eqref{Cdef} reflects the subtle behavior of $\cW_{\mu}(t)$ in the various ranges. In particular, we cannot (completely) avoid the various coefficients of $\eps$ in \eqref{Cdef}, because that would invalidate \eqref{Wbound}. At any rate, $C_{\mu,\eps}\ll_{n,\eps}T_\mu^{-k(n-k)/3+kn^2\eps}$ holds whenever $1\leq k\leq n/2$ is an integer such that $|\mu_k-\mu_{n+1-k}|\gg T_\mu$, and here $k=1$ is always admissible, while under \eqref{generic} even $k=\lfloor n/2\rfloor$ is admissible.
\end{remark}

\begin{remark}
The bound \eqref{Wbound} improves substantially on \cite[Prop.~5.1]{BrT} in the present situation (note that $|W_\nu(a)|$ should be squared in that proposition). Moreover, \eqref{Wbound3} complements \cite[Thm.~1.4]{BrT}, which states under \eqref{mueq1} and \eqref{generic} the analogous lower bound $\|\cW_{\mu}\|_{\infty}\gg_n T_\mu^{n(n-1)(n-2)/12}$. The precise exponential decay of $\cW_{\mu}$ at infinity, but without uniformity in $\mu$, was obtained in \cite[Thm.~11.13]{KO}. Our method would yield the same if we used a stronger version of \eqref{Kbound} and \eqref{Kboundbis}, but the current formulation serves us better.
\end{remark}

\begin{remark} There are several conventions in the literature to parametrize positive diagonal matrices, such as \eqref{ydef} used in \cite{Bu,G,GH}, or \eqref{specialt} used in \cite{BrT,S1,S2,S3}. For clarity, we decided to display \eqref{Wbound} in terms of the matrix entries directly. We record that plugging either \eqref{ydef} or \eqref{specialt} for $t$, the bound \eqref{Wbound} would take the shape
\begin{equation}
\cW_{\mu}(t)\ll_{n,\eps} C_{\mu,\eps}\left(\prod_{i=1}^{n-1}y_i^{i(n-i)}\right)^{1/2-\eps}
\exp\left(-\frac{1}{T_\mu}\sum_{i=1}^{n-1}y_i\right).
\end{equation}
\end{remark}

For the sake of generality, we provide a variant of Theorem~\ref{thm1} valid for Langlands parameters potentially far away from the imaginary axis. It can be applied to studying non-tempered cusp forms (cf.\ \eqref{mueq6}), or analyzing Whittaker transforms with the help of Cauchy's theorem.

\begin{theorem}\label{thm1bis} Let $t=\diag(t_1,\ldots,t_n)\in\GL_n(\RR)$ with $t_1,\dots,t_n>0$. For given $\kappa>\delta>0$, assume that the Langlands parameters satisfy \eqref{mueq4}, \eqref{mueq2}, and
\begin{equation}\label{mueq3}
\max\bigl(|\Re\mu_1|,\dots,|\Re\mu_n|\bigr)\leq\kappa-\delta.
\end{equation}
Then we have
\begin{equation}\label{Wboundbis}
\cW_{\mu}(t)\ll_{n,\kappa,\delta}\tilde C_{\mu,\kappa}
\left(\prod_{j=1}^n t_j^{n+1-2j}\right)^{1/2-\kappa}\left|\prod_{j=1}^n t_j^{\mu_j+\mu_{n+1-j}}\right|^{1/2}
\exp\left(-\frac{1}{T_\mu}\sum_{j=1}^{n-1}\frac{t_j}{t_{j+1}}\right),
\end{equation}
where
\begin{equation}\label{Cdefbis}
\tilde C_{\mu,\kappa}:=\prod_{1\leq j\leq n/2}|2\kappa+\mu_j-\mu_{n+1-j}|^{(2n+1-4j)\kappa+(n+1-2j)^2\kappa}.
\end{equation}
\end{theorem}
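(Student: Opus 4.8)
The plan is to mirror the argument for Theorem~\ref{thm1}, carrying the real parts of $\mu$ through it. Write $\cW^{(m)}_\nu$ for the rank-$m$ spherical Whittaker function. By Stade's recursion \cite{S1}, on the positive diagonal torus $\cW^{(n)}_\mu$ is an integral of $\cW^{(n-1)}_{\mu'}$ over a positive orthant against a product of $K$-Bessel functions whose indices are $\RR$-linear combinations of the $\mu_j$ and whose arguments are monomials in $t_1/t_2,\dots,t_{n-1}/t_n$ and the new integration variables; the reduced parameter $\mu'$ is, up to a common additive shift, a sub-tuple of $\mu$, so that \eqref{mueq2} and \eqref{mueq3} persist for $\mu'$ once $(\kappa,\delta)$ is replaced by a bounded multiple depending only on $n$ (the deterioration being absorbed into the implied constant). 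I would therefore induct on $n$. For $n\leq 2$ the last factor of \eqref{Wboundbis} is identically $1$, so by \eqref{WK} the claim reduces to the elementary uniform estimate $\sqrt{y}\,|K_\nu(2\pi y)|\ll_{\kappa,\delta}|\Gamma(1/2+\nu)|\,|2\kappa+2\nu|^{2\kappa}\,y^{1/2-\kappa}\exp(-y/\max(2,|\nu|))$, valid for $|\Re\nu|\leq\kappa-\delta$: the Gamma factor supplies the decay $e^{-\pi|\Im\nu|/2}$, the exponent $1/2-\kappa$ dominates the singularity of $K_\nu$ at $y=0$ (legitimate since $|\Re\nu|<\kappa$), and the decay of $K_\nu$ beyond its transition point $y\asymp|\nu|$ produces $\exp(-y/T_\mu)$.

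For the inductive step I would substitute the rank-$(n-1)$ case of \eqref{Wboundbis} for $\cW^{(n-1)}_{\mu'}$ into the recursion, bound each new $K$-Bessel factor by the same kind of complex-index estimate (splitting according to the size of its argument relative to the modulus of its index, and applying the small-argument power bound, the oscillatory envelope bound, or the exponential bound as appropriate), and then carry out the remaining finite-dimensional integral over the new variables, along the lines of Theorem~\ref{thm1}. The new feature compared with the tempered case is that the power parts of these Bessel bounds now carry honest exponents, namely the real parts of the various indices, all bounded in terms of $n$ and $\kappa$; recombining them over the recursion, together with the additive re-centering shifts, is what turns the tempered exponent $1/2-\eps$ into $1/2-\kappa$ and produces the factor $\bigl|\prod_j t_j^{\mu_j+\mu_{n+1-j}}\bigr|^{1/2}$, while the $\Gamma$-factors in the normalization and in Stade's kernel, now evaluated at arguments with bounded nonzero real part, contribute via Stirling the polynomial factors $|2\kappa+\mu_j-\mu_{n+1-j}|$ raised to the exponents recorded in $\tilde C_{\mu,\kappa}$ (their $e^{-\pi|\Im(\cdot)|/2}$ parts once more cancelling the Bessel growth near the transitions). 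The exponential factor $\exp\bigl(-\frac{1}{T_\mu}\sum_j t_j/t_{j+1}\bigr)$ survives because at the top recursion level the governing Bessel argument is comparable to $t_{n-1}/t_n$, the lower ratios being supplied by the inductive hypothesis.

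The hard part will be the bookkeeping of real parts across the recursion: one must check that the exponents contributed by the Bessel indices together with the re-centering shifts telescope to precisely those appearing in $\bigl|\prod_j t_j^{\mu_j+\mu_{n+1-j}}\bigr|^{1/2}$ --- here the symmetry $j\leftrightarrow n+1-j$ built into $\tilde C_{\mu,\kappa}$ and into the power factor must be matched against the symmetry of Stade's kernel --- and that the contribution of the transition zones, where neither Bessel bound is individually sharp, still closes with constants depending only on $n,\kappa,\delta$. Conceptually, the point is that, unlike in Theorem~\ref{thm1}, this argument uses only absolute-value bounds for the $K$-Bessel factors and exploits no cancellation near their transition points; that is exactly why $\tilde C_{\mu,\kappa}$ carries positive rather than negative powers of the spectral gaps $|\mu_j-\mu_{n+1-j}|$, which is acceptable since Theorem~\ref{thm1bis} is designed for parameters off the imaginary axis, where no such cancellation is available in any case.
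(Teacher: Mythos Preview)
Your overall strategy is the right one and matches the paper's: induct via Stade's recursion, feed in a uniform $K$-Bessel bound that allows $|\Re\nu|\leq\kappa-\delta$, and track the real parts through the gamma normalization. But two concrete features of Stade's recursion are misstated, and since you yourself flag the bookkeeping as the hard part, these matter.

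First, the recursion in \cite{S1,S2} (displayed in the paper as \eqref{recursion}) takes $\GL_n$ to $\GL_{n-2}$, not to $\GL_{n-1}$: one drops \emph{both} extreme parameters $\mu_1,\mu_n$ and shifts the remaining $n-2$ entries, so $\mu'\in\CC^{n-2}$. This is precisely why the symmetry $j\leftrightarrow n+1-j$ you invoke is natural: the Bessel index is $(\mu_1-\mu_n)/2$, and the $u_i$-weights in \eqref{recursion} carry the exponent $(\mu_{i+1}+\mu_{n-i}-\mu_1-\mu_n)/2$, whose real part is bounded by $2(\kappa-\delta)$ and is absorbed by the factor $(u_i+u_i^{-1})^{2\kappa-2\delta}$ in the final integral. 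With an $n\to n-1$ step there is no such pairing, and the telescoping you hope for toward $|\prod_j t_j^{\mu_j+\mu_{n+1-j}}|^{1/2}$ does not organize itself.

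Second, the source of the exponential factor is inverted. In \eqref{recursion} the product of $K$-Bessel functions runs over \emph{all} $i=1,\dots,n-1$, each with argument $\geq 2\pi y_i$; hence the full $\exp(-T_\mu^{-1}\sum_i y_i)$ is produced at the \emph{current} level from \eqref{Kboundbis}, and the exponential piece of the inductive hypothesis for $W^*_{\mu'}$ is simply discarded (cf.\ \eqref{inductionbound}). Your picture, in which the top level contributes only $t_{n-1}/t_n$ and the rest comes from below, would not close.

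A practical suggestion that also streamlines the bookkeeping: follow the paper and work with the completed, renormalized function $W_\mu^*$ of \eqref{Wdef3}. This has the effect of absorbing the factor $|\prod_j t_j^{\mu_j+\mu_{n+1-j}}|^{1/2}$ into the very definition, so the inductive target becomes the clean inequality \eqref{rewritebis2}; the contribution of the gamma normalization to $\tilde C_{\mu,\kappa}$ is then handled once at the outset via Stirling, rather than being threaded through the recursion.
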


\begin{remark}\label{remark4} We have $\tilde C_{\mu,\kappa}\ll_{n,\kappa}T_\mu^{(n-1)n(n+4)\kappa/6}$. The conclusion \eqref{Wboundbis} would fail for $\delta=0$, which also means that the implied constant blows up as $\delta\to 0+$. More precisely, for $\delta=0$, we would need to decrease the exponent $1/2-\kappa$ by an arbitrary $\eps>0$ and allow the implied constant to depend on $\eps>0$ (cf.\ \eqref{Wbound}), but this is just the same as the current formulation with $(\kappa+\eps,\eps)$ in place of $(\kappa,\delta)$. We prefer the current formulation for several reasons, e.g.\ because for $\kappa:=1/2$ and $\delta:=1/(n^2+1)$, the condition \eqref{mueq3} becomes \eqref{mueq6}.
\end{remark}

\subsection{Rapid decay in Siegel domains} The locally symmetric space $X_n$ has a fundamental domain lying in the standard Siegel set
\begin{equation}\label{siegel}
|x_{ij}|\leq 1/2\ \ \text{for}\ \ j>i\qquad\text{and}\qquad y_1,\dots,y_{n-1}\geq\sqrt{3}/2,
\end{equation}
with coordinates on $\cH_n:=\Z_n(\RR)\bs\GL_n(\RR)/\O_n(\RR)$ as in \cite[Def.~1.2.3]{G}. It is a well-known result,  attributed to several people including Gelfand, Piateskii-Shapiro, Harish-Chandra and Langlands, that $\phi(z)$ decays rapidly in Siegel sets. This has been generalized to much more general domains than Siegel sets \cite{MS,GMP}. Bernstein, in unpublished notes, strengthened this to \emph{exponential} decay, and this result was refined and  perfected by Kr\"otz and Opdam in \cite{KO}. On the other hand, none of these results is uniform in the Langlands parameters, and it is an interesting question how high in the cusp one needs to be to see the exponential decay. A precursor is given by the analytic behavior of the Jacquet--Whittaker function $\cW_\mu(z)$ considered in the previous subsection, which can blow up quite considerably, but eventually decays rapidly for $y_{n-j} = t_j/t_{j+1} \geq T_{\mu}$. Through the Fourier--Whittaker expansion, this should propagate to a quantitative exponential decay of $\phi(z)$ itself. Things are  more complicated, however, as $\U_n(\RR)$ only has a small abelian part to perform classical Fourier analysis, and therefore the Fourier--Whittaker expansion features the translates $\cW_\mu(\delta z)$ for certain matrices $\delta\in\M_n(\ZZ)$ with positive determinant. As $\det\delta\geq 1$, it is still possible to conclude that $\phi(z)$ decays rapidly as soon as the \emph{product} of the $y$-coordinates is sufficiently large:
\begin{equation}\label{product}
  \prod_{i=1}^{n-1} y_i\geq T_{\mu}^{n-1}.
\end{equation}
This aligns nicely with the situation of $\GL_2$ over a totally real number field, say, where the decay depends on the product of the $y$-coordinates in the various copies of the upper half plane. However, using tools from the geometry of numbers, we can say more.

\begin{theorem}\label{thm2} Let $\phi$ be an $L^2$-normalized Maa{\ss} cusp form on $X_n$, and let $z\in\cH_n$ be a point in the Siegel set \eqref{siegel}. There exists a constant $c_n>0$ such that
\begin{equation}\label{thm2bound}
\phi(z)\ll_n\lambda_{\phi}^{n^3}\exp\left(-c_n\,\cY(z)/T_{\mu}\right),
\end{equation}
where
\begin{equation}\label{cYdef}
\cY(z):=\max_{1\leq j\leq n-1}\max\left(\prod_{i=1}^j y_i^{j-i+1},\prod_{i=1}^j y_{n-i}^{j-i+1}\right)^{\frac{2}{j(j+1)}}.
\end{equation}
\end{theorem}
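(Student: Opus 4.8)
The plan is to combine the Fourier--Whittaker expansion of $\phi$ with the pointwise bound on $\cW_\mu$ from Theorem~\ref{thm1}, and then to extract the maximal exponential decay via a geometry-of-numbers argument on the lattice of Fourier frequencies. First I would write, for $z$ in the Siegel set, the expansion of $\phi$ à la Piatetski-Shapiro/Shalika,
\[
\phi(z)=\sum_{\gamma\in\U_{n-1}(\ZZ)\bs\SL_{n-1}(\ZZ)}\ \sum_{\substack{m_1,\dots,m_{n-2}\geq 1\\ m_{n-1}\neq 0}} \frac{A_\phi(m_1,\dots,m_{n-1})}{\prod m_i^{\,i(n-i)/2}}\,\cW_\mu\!\left(M\begin{pmatrix}\gamma&\\&1\end{pmatrix}z\right),
\]
where $M=\diag(m_1\cdots m_{n-1},\dots,m_1,1)$ and the Hecke/Fourier coefficients $A_\phi$ are bounded polynomially in $\lambda_\phi$ and the $m_i$ using Rankin--Selberg together with \eqref{mueq6} (this is where the crude power $\lambda_\phi^{n^3}$ has room to spare). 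The point of Theorem~\ref{thm1} is that $\cW_\mu$ decays like $\exp(-T_\mu^{-1}\sum_j t_j/t_{j+1})$ in the successive ratios of the diagonal Iwasawa coordinates of its argument, while growing at most polynomially; so each term in the expansion is controlled by the diagonal part of $M\gamma z$ (in its own Iwasawa form), and the sum over $\gamma$ and $\vec m$ converges because the polynomial growth of $\cW_\mu$ at $0$ has exponent $<$ the exponent with which $A_\phi/\prod m_i^{i(n-i)/2}$ and the number of relevant $\gamma$ decay — a routine, if tedious, convergence check.

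The substance is then purely geometric. For each $\gamma$ and $\vec m$, the Iwasawa $y$-coordinates of $M\gamma z$ are governed by the successive minima (or, equivalently, by the covolumes of the primitive sublattices) of the lattice $\Lambda_z\subset\ZZ^n$ determined by $z$; concretely, writing $z$ in its lower-triangular Iwasawa form, the $j$-th partial product $\prod_{i\le j} y_i$ of $M\gamma z$ is comparable to the minimal covolume of a rank-$j$ primitive sublattice of a scaled copy of $\Lambda_z$ attached to $(\gamma,\vec m)$, and the crucial observation is that for \emph{some} choice among the $\gamma$ and $\vec m$ — in fact for a translate of $\phi$ by a Weyl element, which one is free to insert — the exponent $\sum_j t_j/t_{j+1}$ is bounded \emph{below} by a constant times $\cY(z)/T_\mu$. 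The definition \eqref{cYdef} is exactly the Mahler-type quantity that records the worst case over all $j$ of the covolume of the ``heaviest'' $j$-dimensional flag coming from either end of the Iwasawa coordinates, raised to the normalizing power $2/(j(j+1))$ that turns a covolume into a single successive-ratio scale. So I would prove a lemma of the form: \emph{for $z$ in the Siegel set there exists $j$ and a choice of representative such that $\sum_{i} t_i/t_{i+1}\gg_n \cY(z)$} — this reduces, after taking logarithms, to a convexity/pigeonhole statement about the Newton-type polygon formed by the numbers $\log\prod_{i\le j}y_i$, using that in the Siegel set all $y_i\ge\sqrt3/2$ so the polygon is ``concave-up enough'' to force a large single step somewhere. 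Minkowski's second theorem (or Mahler's compactness, quantitatively) is the clean way to package the passage from covolumes of sublattices to successive minima and hence to the diagonal scales.

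The main obstacle I anticipate is precisely this geometric lemma: matching the somewhat asymmetric, bilateral definition of $\cY(z)$ in \eqref{cYdef} — note it takes a maximum over products built from \emph{both} $y_1,\dots,y_j$ and $y_{n-1},\dots,y_{n-j}$ — to the exponential-decay exponent of $\cW_\mu$, whose argument after multiplication by $M\gamma$ need not be in a Siegel set and whose Iwasawa coordinates therefore require the full strength of reduction theory to estimate from below. One must be careful that the lower bound on the decay exponent survives the sum over $\gamma$: it suffices that \emph{every} term decays at least as fast as the claimed rate (so that the worst term, which dictates $\cY(z)$, is the bottleneck), and here the constraint $\det M\ge 1$ together with the Siegel condition $y_i\ge\sqrt3/2$ guarantees that applying $M$ and $\gamma$ can only \emph{increase} the relevant successive ratios, never decrease them below the scale set by $z$ alone. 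Once the lemma is in hand, \eqref{thm2bound} follows by bounding $\phi(z)$ term-by-term with \eqref{Wbound}, pulling out the common factor $\exp(-c_n\cY(z)/T_\mu)$, and absorbing the convergent remaining sum together with all polynomial factors into $\lambda_\phi^{n^3}$.
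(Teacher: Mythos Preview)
Your overall architecture matches the paper's: Fourier--Whittaker expansion, Theorem~\ref{thm1} for $\cW_\mu$, Cauchy--Schwarz plus Rankin--Selberg for the coefficients, and geometry of numbers for the $\gamma$-sum. But two of the mechanisms you name are misstated, and one step is missing.

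First, the claim that ``applying $M$ and $\gamma$ can only increase the relevant successive ratios'' is false: for $\gamma\in\SL_{n-1}(\ZZ)$ the Iwasawa ratios $s_i/s_{i+1}$ of $\gamma z'$ can be made arbitrarily small (take any $\gamma$ moving $z'$ far from the Siegel set). What does survive is a one-sided bound on \emph{tail covolumes}. Writing $\gamma z'=usk$ with $s=\diag(s_1,\dots,s_{n-1})$, the rows $(\gamma_iz',\dots,\gamma_{n-1}z')$ span a rank-$(n-i)$ sublattice of $\Lambda:=\ZZ^{n-1}z'$ of covolume $s_{n-1}\cdots s_i$; since its successive minima dominate those of $\Lambda$, Minkowski's second theorem gives $s_{n-1}\cdots s_i\gg_n t_{n-1}\cdots t_i=\prod_{k=1}^{n-i}y_k^{\,n-i-k+1}$ (this is the paper's \eqref{Sproduct}). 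On the other hand each $m_{n-i}\,s_i/s_{i+1}\leq T:=\sum_k m_{n-k}\,s_k/s_{k+1}$, whence $s_i\leq T^{\,n-i}$ and so $T^{\,j(j+1)/2}\gg_n\prod_{i=1}^{j}y_i^{\,j-i+1}$ for every $1\leq j\leq n-1$. This is the lower bound $T\gg_nT(z)$ you need---but only for the \emph{first} of the two inner maxima in \eqref{cYdef}.

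Second, the $(y_{n-1},\dots,y_{n-j})$ half of $\cY(z)$ is not reachable this way, and ``a translate of $\phi$ by a Weyl element'' does not help: the Fourier--Whittaker expansion is intrinsically asymmetric in the top and bottom rows of $z$, and no inner translate repairs that. The paper instead uses the outer automorphism, applying the whole argument to the dual form $\tilde\phi(z):=\phi((z^{-1})^t)$ at the point $\tilde z:=w(z^{-1})^tw$, whose Iwasawa $y$-coordinates are $(y_{n-1},\dots,y_1)$; multiplying the two bounds for $\phi(z)=\tilde\phi(\tilde z)$ and taking a square root is exactly what produces the bilateral maximum in \eqref{cYdef}. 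Finally, your sketch implicitly assumes $\phi$ is a Hecke eigenform. For a general $L^2$-normalized eigenfunction of $\cD_n$ one decomposes into Hecke eigenforms and pays the square root of the Laplace multiplicity, bounded by Donnelly's theorem; this too is absorbed into $\lambda_\phi^{n^3}$.
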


\begin{remark} We clearly have
\begin{equation}\label{cYbound}
\cY(z)\geq\left(\prod_{i=1}^{n-1} y_i^{n-i}\prod_{i=1}^{n-1}y_{n-i}^{n-i}\right)^{\frac{1}{(n-1)n}}
=\left(\prod_{i=1}^{n-1} y_i\right)^{\frac{1}{n-1}},
\end{equation}
which recovers the claim containing \eqref{product}, but the present result is stronger.
\end{remark}

\subsection{The global sup-norm} We now turn to a finer analysis of $\| \phi \|_{\infty}$. There has been enormous progress in recent years on the sup-norm problem for automorphic forms in various settings and with a focus on very different aspects such as: results valid for groups as general as possible \cite{BM,Ma}; bounds as strong as possible in terms of the exponent of $\lambda_{\phi}$ \cite{IS,BHM,BHMM}; results as uniform as possible in particular with respect to congruence covers of the underlying manifold \cite{T,Sah2,BHMM}; lower bounds for sup-norms \cite{Mi1,Mi2,Sah1,BrT,BrM}; results in the weight aspect for modular forms of integral or half-integral weight \cite{X,DS,Ki,FJK,Ste1,Ste2}; as well as results for certain types of Eisenstein series \cite{Bl1,HX}. Still, the literature on the sup-norm problem for groups of higher rank is fairly limited, and in particular for groups other than $\GL_2$ and $\GL_3$, there is no result available for the global sup-norm on non-compact quotients. The reason for this is related to the discussion of the previous two subsections. In small rank, the rapid decay of the Jacquet--Whittaker function and hence, to some extent, the rapid decay of the cusp form kicks in sufficiently early to make the analysis of compact and non-compact quotients fairly similar. In higher rank, the behavior changes completely, and it is the high peaks of the Jacquet--Whittaker function (of which the Airy-type bump of the $K$-Bessel function is a toy model) that dominate the sup-norm of a cusp form. This phenomenon was first observed by Brumley and Templier \cite{BrT} who used it to prove lower bounds on $\| \phi \|_{\infty}$ that can be much larger than standard bounds in any fixed compact part of the underlying space. Quantitatively, Sarnak's standard upper bound \cite{Sa} gives $\|\phi|_{\Omega}\|_\infty\ll_{n,\Omega}\lambda_{\phi}^{\gamma(n)}$ with $\gamma(n)<\dim X_n<n^2$ for any fixed compact subset $\Omega\subset X_n$, while Brumley and Templier obtained (for most cusp forms) a lower bound $\| \phi \|_{\infty} \gg \lambda_{\phi}^{\delta(n)}$ with  $\delta(n) \gg \height(\PGL_n) \gg n^3$. Here we prove a corresponding  upper bound for the global sup-norm of comparable order of magnitude.

\begin{theorem}\label{thm3} Let $\phi$ be an $L^2$-normalized Maa{\ss} cusp form on $X_n$, and let $z\in\cH_n$ be a point in the Siegel set \eqref{siegel}. Then  we have
\begin{equation}\label{thm3bound}
\phi(z)\ll_n\lambda_\phi^{(n^2-n)/8}+\lambda_\phi^{(n^2-n-1)/8}\prod_{i=1}^{n-1}y_i^{i(n-i)/2}.
\end{equation}
In particular, for any $\eps>0$, we have
\begin{equation}\label{thm4bound}
\|\phi\|_\infty\ll_{n,\eps}\lambda_\phi^{(n^2-2)(n+1)/16+\eps}.
\end{equation}
\end{theorem}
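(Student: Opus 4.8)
The plan is to derive Theorem~\ref{thm3} from the Fourier--Whittaker expansion of $\phi$ combined with Theorem~\ref{thm1} and a pre-trace formula argument, and then optimize over the Siegel domain to get \eqref{thm4bound}. First I would recall that for $z$ in the Siegel set, the cusp form admits an expansion of the shape $\phi(z)=\sum_{\delta} a_\delta\,\cW_\mu(\delta z)$, where $\delta$ runs over representatives of $\U_{n-1}(\ZZ)\bs\M_{n-1}(\ZZ)^{\det\neq 0}$ (embedded in the usual way) together with a unipotent average, and the Fourier coefficients $a_\delta$ are controlled — via the Hecke relations and the Rankin--Selberg bound — by $\lambda_\phi^{O(1)}$ times a divisor-type function of $\det\delta$, uniformly in $\lambda_\phi$. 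The dyadic structure of $\cY(z)$ entering Theorem~\ref{thm2} tells us which of the two competing terms in \eqref{thm3bound} is relevant: when the $y_i$ are not too large, the whole sum is short (the exponential decay from \eqref{Wbound} truncates $\delta$ to a bounded set), and then the leading contribution is the single term $\delta=\mathrm{id}$, whose size is governed by $\|\cW_\mu\|_\infty\ll_n C_{\mu,\eps}T_\mu^{(n^3-n)/12}$, matched against the $L^2$-normalization; this must be balanced against the number of lattice points $\delta$ with $\det\delta$ in a given range, producing the polynomial factor $\lambda_\phi^{(n^2-n)/8}$.

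For the main, spectral, input I would instead run the pre-trace formula: choose a test function on $\GL_n(\RR)$ whose spherical transform concentrates near the spectral parameter $\mu$ of $\phi$ (a bump of width $\asymp 1$ around each $\mu_j$, so that the archimedean transform localizes and the geometric side can be estimated), apply it to $\phi$ at the point $z$, and bound $|\phi(z)|^2$ by the diagonal term plus the contribution of non-trivial $\gamma\in\GL_n(\ZZ)$ moving $z$ close to itself. The diagonal term is the local density, which by Harish-Chandra's expansion / the known size of the spherical function at the identity contributes $\asymp\lambda_\phi^{(\dim X_n - \mathrm{rk})/2}=\lambda_\phi^{(n^2-n)/4}$ after accounting for the Plancherel volume; the off-diagonal lattice-point count in a Siegel domain, using reduction theory, contributes the factor $\prod_i y_i^{i(n-i)/2}$ (this is essentially $\sqrt{\vol}$ of the relevant horoball piece) times $\lambda_\phi^{-1/4}$ coming from the extra saving in one direction. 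Taking square roots yields \eqref{thm3bound}. For \eqref{thm4bound}, one substitutes this into the Fourier--Whittaker bound: the second term of \eqref{thm3bound} grows with the $y_i$, but Theorem~\ref{thm1}'s exponential factor forces $\prod y_i \ll T_\mu^{n-1}$ in the regime where $\phi$ is not already negligible, and more precisely \eqref{Wbound} with the polynomial weight $(\prod_j t_j^{n+1-2j})^{1/2}=\prod_i y_i^{i(n-i)/2}$ competes against the exponential; optimizing the product $\lambda_\phi^{(n^2-n-1)/8}\prod_i y_i^{i(n-i)/2}$ against $\exp(-c_n\cY(z)/T_\mu)$, using the convexity bound $\cY(z)\gg(\prod y_i)^{1/(n-1)}$ from \eqref{cYbound}, one finds the crossover at $\prod_i y_i^{i(n-i)/2}\asymp\lambda_\phi^{(\text{something})}$ that produces the exponent $(n^2-2)(n+1)/16$.

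Concretely, to extract \eqref{thm4bound} I would split the Siegel set according to whether $\prod_i y_i^{i(n-i)/2}\le\lambda_\phi^{A}$ for a threshold $A$ to be chosen. Below the threshold, \eqref{thm3bound} directly gives $\phi(z)\ll_n\lambda_\phi^{(n^2-n-1)/8+A}$. Above the threshold, at least one partial product $\prod_{i\le j}y_i^{j-i+1}$ (or its mirror) is large, so $\cY(z)$ is large, and Theorem~\ref{thm2} gives $\phi(z)\ll_n\lambda_\phi^{n^3}\exp(-c_n\lambda_\phi^{B})$ for some $B>0$ depending on how $A$ relates to the exponent structure of $\cY$ — which is super-polynomially small, hence negligible. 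Choosing $A$ so that the polynomial bound below the threshold equals $(n^2-2)(n+1)/16$ is a finite computation; the only subtlety is that the exponents in $\cY(z)$ are uneven, so one must verify that $\prod_i y_i^{i(n-i)/2}\ge\lambda_\phi^A$ indeed forces $\cY(z)\ge\lambda_\phi^{c}$ with $c>0$ uniformly, which follows because $\cY(z)$ dominates $(\prod y_i)^{1/(n-1)}$ and $\prod_i y_i^{i(n-i)/2}$ is a bounded power of $\prod y_i$.

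I expect the main obstacle to be the pre-trace formula step in the non-compact setting: one must control the contribution of the continuous spectrum (Eisenstein series) and of the parabolic lattice elements $\gamma$ that can move a high point $z$ only slightly, which is precisely where the geometry-of-numbers estimates on Siegel domains — bounding the number of $\gamma\in\GL_n(\ZZ)$ with $\|z^{-1}\gamma z\|\le R$ by a product of the $y_i$ — enter and where uniformity in $\lambda_\phi$ is delicate, since the width-one localization of the spherical transform is exactly what makes the archimedean test function of bounded $L^1$-norm but forces a careful stationary-phase analysis of the Harish-Chandra transform. Reconciling this with the Fourier--Whittaker side (so that the two terms of \eqref{thm3bound} are genuinely the truth and not an artifact of the method) is the crux.
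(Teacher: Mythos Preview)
Your overall architecture is right in one respect and off in another. The derivation of \eqref{thm4bound} from \eqref{thm3bound} and Theorem~\ref{thm2} is essentially what the paper does: split according to the size of $\prod y_i$ (the paper uses the threshold $\prod y_i\leq\lambda_\phi^{(n-1)/2+\eps}$, which via $i(n-i)\leq n^2/4$ gives exactly the exponent $(n^2-2)(n+1)/16$), so that part is fine.

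For \eqref{thm3bound} itself, however, you are conflating two different arguments. The Fourier--Whittaker route you describe does \emph{not} produce \eqref{thm3bound}: the single term $\delta=\mathrm{id}$ already has size $\|\cW_\mu\|_\infty\ll T_\mu^{(n^3-n)/12}\asymp\lambda_\phi^{(n^3-n)/24}$, which for $n\geq 3$ exceeds $\lambda_\phi^{(n^2-n)/8}$. That expansion is what the paper uses for Theorem~\ref{thm2}, not here. The bound \eqref{thm3bound} comes purely from the pre-trace inequality, and your sketch of that step is missing the key mechanism. You write that ``the off-diagonal lattice-point count \dots\ contributes $\prod_i y_i^{i(n-i)/2}$ times $\lambda_\phi^{-1/4}$,'' but the saving $\lambda_\phi^{-1/4}$ (from the factor $(1+\lambda_\phi^{1/2}\|C(g)\|)^{-1/2}$ in the test function) is only exploitable if the count of $\gamma$ with $\|C(z^{-1}\gamma z)\|\leq K$ scales appropriately with $K$. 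A naive reduction-theory count of $\gamma\in\GL_n(\ZZ)$ with $z^{-1}\gamma z\in\Omega$ gives no $K$-dependence. The paper's device is to stratify $\PGL_n(\ZZ)$ by which maximal parabolics $P_m$ contain $\gamma$: if $\gamma\notin P_m$ then $y_m\ll 1$ (so that coordinate is harmless), while if $\gamma\in P_m$ then the corresponding upper-right block entries of $z^{-1}\gamma z$ are $\ll\|C(z^{-1}\gamma z)\|\ll K$ rather than $\ll 1$, which feeds back via induction on the entries to $\gamma_{ij}\ll 1+KY_{ij}^{-1}$. This yields the count $\prod_{(j,i)\in B_M}(1+KY_{ij}^{-1})$, and only then does summing $K^{-1/2}$ times this over dyadic $K\in[\lambda_\phi^{-1/2},1]$ separate into the two terms of \eqref{thm3bound}. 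Without this parabolic stratification your argument has a genuine gap.

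Also, two smaller points: your exponent bookkeeping is off (the count for $|\phi|^2$ picks up $\prod y_k^{k(n-k)}$, not its square root, before you take the square root at the end); and your worry about the continuous spectrum is unnecessary, since the pre-trace \emph{inequality} $|\phi(z)|^2\leq\sum_\gamma f_\mu(z^{-1}\gamma z)$ follows from positivity of the spherical transform and simply discards the Eisenstein contribution.
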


\begin{remark}
The bound \eqref{thm3bound} is proved by an application of Selberg's pre-trace formula and can be slightly improved for Hecke eigenforms by combining it with the amplification method. For $n$ of moderate size and in certain regions or for special forms, a careful investigation of the Fourier--Whittaker expansion can also lead to refined estimates, see Subsection~\ref{sub2} for more details. The upper bound \eqref{thm4bound} complements the lower bound $\|\phi\|_\infty\gg_{n,\eps}\lambda_\phi^{n(n-1)(n-2)/24-\eps}$ established under \eqref{mueq1} and \eqref{generic} by Brumley and Templier~\cite[Thm.~1.1]{BrT}. For $n=2$ and $n=3$, the best known upper bounds for $\|\phi\|_\infty$ can be found in \cite{IS} and \cite{BHM}, respectively.
\end{remark}

The paper is organized as follows. Section~\ref{section2} is devoted to Jacquet--Whittaker functions. Subsections~\ref{sub5}--\ref{sub6} contain background material, Subsection~\ref{sub7} contains the proof of Theorem~\ref{thm1}, and Subsection~\ref{sub8} contains the proof of Theorem~\ref{thm1bis}. Section~\ref{section3} is devoted to Maa{\ss} cusp forms. Subsections~\ref{sub1}--\ref{sub2} contain the proof of Theorem~\ref{thm2}, and Subsections~\ref{sub3}--\ref{sub4} contain the proof of Theorem~\ref{thm3}.

\begin{acknowledgement} We thank Antal Balog, Farrell Brumley, Jack Buttcane and Stephen D. Miller for useful discussions. We also thank the referee for reading the paper carefully and suggesting that we extend Theorem~\ref{thm1bis} to its current form.
\end{acknowledgement}

\section{Pointwise bounds for the Jacquet--Whittaker function}\label{section2}

Before proving Theorems~\ref{thm1} and \ref{thm1bis}, we collect first some basic facts about the Jacquet--Whittaker function for $\GL_n(\RR)$. Our references are Jacquet's seminal work \cite{J}, Stade's important series \cite{S1,S2,S3}, and selected chapters by Goldfeld~\cite[Ch.~5]{G} and Goldfeld--Hundley~\cite[Ch.~14]{GH}. We have also benefitted greatly from the excellent discussions of Brumley--Templier~\cite{BrT}, both in the original version and the current reduced version.

For the sake of discussion, we shall work with arbitrary parameters $\mu_j$ satisfying \eqref{mueq4}. The statement that these are the (archimedean) Langlands parameters of $\phi$ means, by definition, that the cuspidal representation of $\GL_n(\RR)$ generated by $\phi$ is isomorphic to the principal series representation parabolically induced from the character
\[t\mapsto\prod_{j=1}^n t_j^{\mu_j},\qquad t=\diag(t_1,\ldots,t_n),\qquad t_1,\ldots,t_n>0.\]
This representation is unitary, as reflected by the relation \eqref{mueq5}. In this section, we use this relation in Subsection~\ref{sub6} only. Until that time, our only assumption will be \eqref{mueq4}.

\subsection{Jacquet's functional equation}\label{sub5} By the Iwasawa decomposition, any matrix $g\in\GL_n(\RR)$ can be written as $g=utk$, where $u\in\U_n(\RR)$ is unipotent upper-triangular, $t=\diag(t_1,\ldots,t_n)$ is diagonal with positive diagonal entries, and $k\in\O_n(\RR)$ is orthogonal. Therefore, the height function
\[H_{\mu}(g):=\prod_{j=1}^n t_j^{(n+1)/2-j+\mu_j},\qquad g\in\GL_n(\RR),\]
is invariant under $\Z_n(\RR)$ and right-invariant under $\O_n(\RR)$, so it can be regarded as a function on $\PGL_n(\RR)$ and also as a function of $\cH_n$. We define the archimedean (spherical) Jacquet--Whittaker function by the formula
\begin{equation}\label{Wdef1}
\cW_{\mu}(g):=\int_{\U_n(\RR)} H_{\mu}(wug)\,\ov{\psi(u)}\,du,\qquad g\in\GL_n(\RR),
\end{equation}
where $w$ is the long Weyl-element, and
\begin{equation}\label{psidef}
\psi(u):=e(u_{1,2}+\dots+u_{n-1,n}),\qquad u=(u_{ij})\in\U_n(\RR),
\end{equation}
is the standard character of $\U_n(\RR)$. For any $g\in\GL_n(\RR)$, and for $\mu\in\CC^n$ lying in the positive Weyl chamber $\Re\mu_1>\dots>\Re\mu_n$, the integral in \eqref{Wdef1} converges locally uniformly and hence defines a holomorphic function $\mu\mapsto\cW_{\mu}(g)$. Jacquet~\cite{J} proved that this function extends holomorphically to every $\mu\in\CC^n$, and the completed Jacquet--Whittaker function
\begin{equation}\label{Wdef2}
W_{\mu}(g):=\left(\prod_{1\leq j<k\leq n}\Gamma_\RR(1+\mu_j-\mu_k)\right)\cW_{\mu}(g),\qquad g\in\GL_n(\RR),
\end{equation}
is invariant under any permutation of the $\mu_j$'s (action of the Weyl group). As usual, $\Gamma_\RR(s)$ abbreviates $\pi^{-s/2}\Gamma(s/2)$.

Implicit in Jacquet's paper \cite{J} is that not only $\mu\mapsto\cW_{\mu}(g)$ but even $\mu\mapsto W_{\mu}(g)$ is holomorphic on $\CC^n$ (e.g.\ the analogous adelic statement is \cite[Thm.~8.6]{J}). At any rate, an alternative and more direct proof of this stronger statement is provided by Stade's recursion, to be discussed in the next subsection. The proof of \cite[Thm.~3.4]{J} claims to apply a theorem by Hartogs on analytic continuation (probably a result from \cite{Ha}), but we believe it is really Bochner's tube theorem \cite{Bo} that is being used there.

\subsection{Stade's recursion} Stade~\cite{S1} discovered that the Jacquet--Whittaker function for $\GL_n(\RR)$ can be expressed as a certain $(n-2)$-dimensional integral involving the Jacquet--Whittaker function for $\GL_{n-2}(\RR)$. It generalizes the Vinogradov--Takhtadzhyan formula \cite{VT} that deals with the $n=3$ case. We shall quote Stade's result in the form of \cite[(4.3)]{S2}, because it fixes a constant from \cite[Thm.~2.1]{S1} and uses the Langlands parameters \eqref{mueq4} instead of the closely related spectral parameters
\begin{equation}\label{nueq}
\nu=(\nu_1,\dots,\nu_{n-1})\in\CC^{n-1}\qquad\text{given by}\qquad\nu_i:=(1+\mu_{n-i}-\mu_{n-i+1})/n.
\end{equation}

As $W_{\mu}$ is invariant under $\Z_n(\RR)$, right-invariant under $\O_n(\RR)$, and transforms by $\psi$ under the left-action of $\U_n(\RR)$, it suffices to understand its values at diagonal matrices of the form
\begin{equation}\label{specialt}
t=\diag(y_1y_2\dots y_{n-1}, y_2y_3\dots y_{n-1},\dots,y_{n-1},1),\qquad y_1,\dots,y_{n-1}>0.
\end{equation}
In accordance with \cite[(4.2)]{S2}, we introduce
\begin{equation}\label{Wdef3}
W_{\mu}^*(y_1,\dots,y_{n-1}):=W_{\mu}(t)\prod_{i=1}^{n-1}y_i^{-i(n-i)/2}\prod_{j=1}^i y_i^{-(\mu_j+\mu_{n+1-j})/2}.
\end{equation}
We note for later reference that if we write $t$ in \eqref{specialt} as $\diag(t_1,\dots,t_n)$, then $t_n=1$ and $y_i=t_i/t_{i+1}$, whence
\begin{equation}\label{convfactor}
\prod_{i=1}^{n-1}y_i^{-i(n-i)/2}\prod_{j=1}^i y_i^{-(\mu_j+\mu_{n+1-j})/2}=\left(\prod_{j=1}^n t_j^{(n+1-2j)+(\mu_j+\mu_{n+1-j})}\right)^{-1/2}.
\end{equation}

Now starting from \eqref{mueq4}, we introduce a shorter vector of Langlands parameters by dropping the first and the last entry of $\mu$, and then shifting the remaining $n-2$ entries to ensure that their sum is zero:
\[\mu':=\left(\mu_2+\frac{\mu_1+\mu_n}{n-2},\ldots,\mu_{n-1}+\frac{\mu_1+\mu_n}{n-2}\right)\in\CC^{n-2}.\]
With these notational conventions, Stade's recursion \cite[(4.3)]{S2} reads
\begin{equation}\label{recursion}\begin{split}
W_{\mu}^*(y_1,\dots,y_{n-1})=2^{2n-3}\int_{(\RR_{>0})^{n-2}}&\left\{\prod_{i=1}^{n-1}u_i^{\frac{\mu_{i+1}+\mu_{n-i}-\mu_1-\mu_n}{2}} K_{\frac{\mu_1-\mu_n}{2}}\left(2\pi y_i\sqrt{(1+u_{i-1}^2)(1+u_i^{-2})}\right)\right\}\\
&\times W_{\mu'}^*\left(y_2\frac{u_1}{u_2},\ldots,y_{n-2}\frac{u_{n-3}}{u_{n-2}}\right)\frac{du_1\dots du_{n-2}}{u_1\dots u_{n-2}},
\end{split}\end{equation}
where the implicit conventions $u_0=u_{n-1}^{-1}=0$ and $u_{n-1}^0=1$ are in place, and for $n=3$ the function $W_{\mu'}^*$ is understood to equal $1$.

Applying some crude bounds for the $K$-Bessel function (e.g.\ \cite[Prop.~9]{HM}), the integral in \eqref{recursion} converges locally uniformly for $\mu$ satisfying \eqref{mueq4}. This way we can see directly that $\mu\mapsto W_{\mu}(g)$ is holomorphic on $\CC^n$, and the reflection principle $W_{\ov\mu}=\ov{W_\mu}$ holds.

\subsection{Stade's formula}\label{sub6} Confirming a conjecture of Bump, Stade~\cite{S3} expressed the archimedean factor of certain Rankin--Selberg $L$-functions in terms of the archimedean Jacquet--Whittaker function. In our notation and in a special case, Stade's main result \cite[Thm.~1.1]{S3} reads
\[\int_{(\RR_{>0})^{n-1}}(W_\mu W_{-\mu})(\diag(t_1,\dots,t_{n-1},1))\,\prod_{i=1}^{n-1}\frac{t_i^{s-1}\,dt_i}{t_i^{n+1-2i}}=
\frac{2^{1-n}}{\Gamma_\RR(ns)}\prod_{j,k=1}^n\Gamma_\RR(s+\mu_j-\mu_k).\]
This formula is valid for any $\mu$ satisfying \eqref{mueq4} and for $\Re s$ sufficiently large in terms of $\mu$.

Using also the unitarity assumption \eqref{mueq5}, Jacquet's functional equation (invariance of $\mu\mapsto W_\mu$ under any permutation of the $\mu_j$'s), and the reflection principle (cf.\ previous subsection), we get $W_{-\mu}=W_{\ov\mu}=\ov{W_\mu}$, hence also $W_\mu W_{-\mu}=|W_\mu|^2$. That is, for $\mu$ satisfying \eqref{mueq4} and \eqref{mueq5}, Stade's formula yields
\begin{equation}\label{normalization2}
\int_{(\RR_{>0})^{n-1}}\left|W_\mu(\diag(t_1,\dots,t_{n-1},1))\right|^2\,\prod_{i=1}^{n-1}\frac{t_i^{s-1}\,dt_i}{t_i^{n+1-2i}}=
\frac{2^{1-n}}{\Gamma_\RR(ns)}\prod_{j,k=1}^n\Gamma_\RR(s+\mu_j-\mu_k).
\end{equation}

Finally, for tempered Langlands parameters as in \eqref{mueq1}, we infer by \eqref{Wdef2} the identity
\[\int_{(\RR_{>0})^{n-1}}\left|\cW_\mu(\diag(t_1,\dots,t_{n-1},1))\right|^2\,\prod_{i=1}^{n-1}\frac{t_i^{s-1}\,dt_i}{t_i^{n+1-2i}}=
\frac{2^{1-n}}{\Gamma_\RR(ns)}\prod_{j,k=1}^n\frac{\Gamma_\RR(s+\mu_j-\mu_k)}{\Gamma_\RR(1+\mu_j-\mu_k)}.\]
In the light of Theorem~\ref{thm1}, the last integral converges for $\Re s>0$, and evaluating it at $s=1$ yields the normalization \eqref{normalization} claimed in the Introduction. Note that \eqref{mueq2} can be assumed here without any loss of generality, because Jacquet's functional equation coupled with \eqref{mueq1} and \eqref{Wdef2} shows that $|\cW_\mu|$ is invariant under any permutation of the $\mu_j$'s.

\subsection{Proof of Theorem~\ref{thm1}}\label{sub7} The inequality \eqref{Wbound} is invariant under the action of the center $\Z_n(\RR)$, hence it suffices to prove it when $t_n=1$. Then we can parametrize $t$ as in \eqref{specialt}, with $y_i:=t_i/t_{i+1}$.
Using \eqref{WK}, we see that for $n=2$ the inequality \eqref{Wbound} is equivalent to the known bound (cf.\ \cite[p.~679]{BHo} and \cite[Prop.~9]{HM})
\begin{equation}\label{Kbound}
e^{\pi|\nu|/2} K_\nu(2\pi y)\ll_\eps |1+2\nu|^{-1/3+\eps}y^{-\eps}\exp\left(-\frac{y}{\max(2,|\nu|)}\right),\qquad\nu\in i\RR.
\end{equation}
Now we assume that either $n=3$, or $n\geq 4$ and \eqref{Wbound} holds for $n-2$ in place of $n$.

Using the definitions \eqref{Wdef2} and \eqref{Wdef3} along with the observation \eqref{convfactor}, we can rewrite \eqref{Wbound} as
\begin{equation}\label{rewrite}
\frac{W_{\mu}^*(y_1,\dots,y_{n-1})}{\prod_{1\leq j<k\leq n}\Gamma_\RR(1+\mu_j-\mu_k)}
\ll_{n,\eps} C_{\mu,\eps}\,\prod_{i=1}^{n-1} y_i^{-i(n-i)\eps}\exp\left(-\frac{y_i}{T_\mu}\right).
\end{equation}
We substitute the right hand side of \eqref{recursion} for the numerator, and we estimate the integrand in \eqref{recursion} by invoking \eqref{Kbound} with $(n-1)\eps$ in place of $\eps$ and the induction hypothesis \eqref{rewrite} with $\mu'$ in place of $\mu$. For the product of $K$-Bessel functions we obtain
\begin{equation}\label{productK}
\begin{split}
&\prod_{i=1}^{n-1}\frac{K_{\frac{\mu_1-\mu_n}{2}}\left(2\pi y_i\sqrt{(1+u_{i-1}^2)(1+u_i^{-2})}\right)}
{\Gamma_\RR(1+\mu_1-\mu_n)}\ll_{n,\eps}|1+\mu_1-\mu_n|^{-(n-1)/3+(n-1)^2\eps}\\
&\times\left\{\prod_{i=1}^{n-1} y_i^{-(n-1)\eps}\exp\left(-\frac{y_i}{T_\mu}\right)\right\}
\left\{\prod_{i=1}^{n-2}(u_i+u_i^{-1})^{-(n-1)\eps}\right\}.
\end{split}
\end{equation}
We observe that, by \eqref{mueq1}--\eqref{mueq2} and Stirling's formula,
\[\Gamma_\RR(1+\mu_1-\mu_n)\asymp
\Gamma_\RR(1+\mu_1-\mu_i)\Gamma_\RR(1+\mu_i-\mu_n),\qquad 2\leq i\leq n-1,\]
hence the product of gamma factors in \eqref{productK} is essentially of the same size as a specific subproduct of the gamma factors in \eqref{rewrite}:
\begin{equation}\label{productgamma}
\prod_{i=1}^{n-1}\Gamma_\RR(1+\mu_1-\mu_n)\asymp_n\prod_{\substack{{1\leq j<k\leq n}\\{\text{$j=1$ or $k=n$}}}}\Gamma_\RR(1+\mu_j-\mu_k).
\end{equation}
For the Jacquet--Whittaker function in the integral of \eqref{recursion}, we obtain for $n\geq 4$ (omitting the exponential factors)
\begin{equation}\label{inductionbound}
\frac{W_{\mu'}^*\left(y_2\frac{u_1}{u_2},\ldots,y_{n-2}\frac{u_{n-3}}{u_{n-2}}\right)}{\prod_{2\leq j<k\leq n-1} \Gamma_\RR(1+\mu_j-\mu_k)}
\ll_{n,\eps} C_{\mu',\eps}\left\{\prod_{i=2}^{n-2}y_i^{-(i-1)(n-1-i)\eps}\right\}
\left\{\prod_{i=1}^{n-2}u_i^{-(n-1-2i)\eps}\right\}.
\end{equation}
This bound is also valid for $n=3$, because in that case both sides are equal to $1$.

Combining \eqref{recursion} with \eqref{productK}--\eqref{inductionbound} and \eqref{Cdef}, we infer that
\begin{equation}\label{laststep}\begin{split}
&\frac{W_{\mu}^*(y_1,\dots,y_{n-1})}{\prod_{1\leq j<k\leq n} \Gamma_\RR(1+\mu_j-\mu_k)}\\
&\ll_{n,\eps}C_{\mu,\eps}\left\{\prod_{i=1}^{n-1} y_i^{-i(n-i)\eps}\exp\left(-\frac{y_i}{T_\mu}\right)\right\}
\int_{(\RR_{>0})^{n-2}}\prod_{i=1}^{n-2}(u_i^{n-1-i}+u_i^{-i})^{-2\eps}\;\frac{du_i}{u_i}.
\end{split}\end{equation}
The last integral splits and converges, hence \eqref{rewrite} follows as desired.

In order to prove \eqref{Wbound3}, consider an arbitrary positive diagonal matrix $t=\diag(t_1,\ldots,t_n)\in\GL_n(\RR)$, and denote by $T\in\{T_\mu,2T_\mu,3T_\mu,\dots\}$ the unique positive multiple of $T_\mu$ such that
\[T-T_\mu<\sum_{i=1}^{n-1}\frac{t_i}{t_{i+1}}\leq T.\]
Then clearly
\[\prod_{j=1}^n t_j^{n+1-2j}=\prod_{i=1}^{n-1}\left(\frac{t_i}{t_{i+1}}\right)^{i(n-i)}\leq\prod_{i=1}^{n-1}T^{i(n-i)}=T^{(n^3-n)/6},\]
and hence by \eqref{Wbound},
\[\cW_{\mu}(t)\ll_{n,\eps} C_{\mu,\eps}\,e^{-T/T_\mu}\,T^{(n^3-n)/12}\ll_n C_{\mu,\eps}\,T_\mu^{(n^3-n)/12},\]
with a bit to spare. The sup-norm bound \eqref{Wbound3} is immediate from here, since $\cW_{\mu}$ is right-invariant under $\O_n(\RR)$ and transforms by the character defined in \eqref{psidef} under the left-action of $\U_n(\RR)$.

\subsection{Proof of Theorem~\ref{thm1bis}}\label{sub8} We proceed similarly as in the proof of Theorem~\ref{thm1}, so we shall be brief. It suffices to prove \eqref{Wboundbis} for $t$ as in \eqref{specialt}, in which case it can be rewritten as (cf.\ \eqref{Wdef2} and \eqref{Wdef3}--\eqref{convfactor})
\begin{equation}\label{rewritebis}
\frac{W_{\mu}^*(y_1,\dots,y_{n-1})}{\prod_{1\leq j<k\leq n}\Gamma_\RR(1+\mu_j-\mu_k)}
\ll_{n,\kappa,\delta}\tilde C_{\mu,\kappa}\,\prod_{i=1}^{n-1} y_i^{-i(n-i)\kappa}\exp\left(-\frac{y_i}{T_\mu}\right).
\end{equation}
On the left hand side, we employ the bound (cf.\ \eqref{mueq2} and \eqref{mueq3})
\[\frac{1}{\Gamma_\RR(1+\mu_j-\mu_k)}\ll_{\kappa,\delta}e^{\pi\Im(\mu_j-\mu_k)/4}|2\kappa+\mu_j-\mu_k|^\kappa,\qquad 1\leq j<k\leq n,\]
which is obvious when $|\mu_j-\mu_k|\leq 1$ and a consequence of Stirling's approximation otherwise. By \eqref{mueq2} and \eqref{mueq3}, the product of the right hand side over all pairs $1\leq j<k\leq n$ is clearly
\[\ll_{n,\kappa,\delta}\prod_{1\leq j\leq n/2}e^{(n+1-2j)\pi\Im(\mu_j-\mu_{n+1-j})/4}|2\kappa+\mu_j-\mu_{n+1-j}|^{(2n+1-4j)\kappa},\]
hence for \eqref{rewritebis} it suffices to prove the slightly stronger inequality
\begin{equation}\label{rewritebis2}
W_{\mu}^*(y_1,\dots,y_{n-1})\prod_{1\leq j\leq n/2}e^{(n+1-2j)\pi\Im(\mu_j-\mu_{n+1-j})/4}
\ll_{n,\kappa,\delta}\hat C_{\mu,\kappa}\,\prod_{i=1}^{n-1} y_i^{-i(n-i)\kappa}\exp\left(-\frac{y_i}{T_\mu}\right),
\end{equation}
where (cf.\ \eqref{Cdefbis})
\[\hat C_{\mu,\kappa}:=\prod_{1\leq j\leq n/2}|2\kappa+\mu_j-\mu_{n+1-j}|^{(n+1-2j)^2\kappa}.\]

The new inequality \eqref{rewritebis2} is an analogue of \eqref{rewrite}, where the roles of $\eps$ and $C_{\mu,\eps}$
are played by $\kappa$ and $\hat C_{\mu,\kappa}$, respectively. For $n=2$, it is a consequence of \eqref{WK}, \eqref{Wdef2}, \eqref{Wdef3}--\eqref{convfactor} and the known bound (cf.\ \cite[Prop.~9]{HM})
\begin{equation}\label{Kboundbis}
e^{\pi|\Im\nu|/2} K_\nu(2\pi y)\ll_{n,\kappa,\delta}|\kappa+\nu|^{\sigma}y^{-\sigma}\exp\left(-\frac{y}{\max(2,|\nu|)}\right),
\qquad|\Re\nu|\leq\kappa-\delta,\qquad\kappa\leq\sigma\leq n\kappa.
\end{equation}
Now we assume that either $n=3$, or $n\geq 4$ and \eqref{rewritebis2} holds for $n-2$ in place of $n$.
Arguing as below \eqref{rewrite} with $\eps:=\kappa$, but using \eqref{Kboundbis} with $\sigma:=(n-1)\kappa$ instead of using \eqref{Kbound} with $(n-1)\eps$ in place of $\eps$, we arrive at the following variant of \eqref{laststep}:
\begin{equation}\label{laststepbis}\begin{split}
&W_{\mu}^*(y_1,\dots,y_{n-1})\prod_{1\leq j\leq n/2}e^{(n+1-2j)\pi\Im(\mu_j-\mu_{n+1-j})/4}\\
&\ll_{n,\kappa,\delta}\hat C_{\mu,\kappa}\left\{\prod_{i=1}^{n-1} y_i^{-i(n-i)\kappa}\exp\left(-\frac{y_i}{T_\mu}\right)\right\}
\int_{(\RR_{>0})^{n-2}}\prod_{i=1}^{n-2}(u_i+u_i^{-1})^{2\kappa-2\delta}(u_i^{n-1-i}+u_i^{-i})^{-2\kappa}\;\frac{du_i}{u_i}.
\end{split}\end{equation}
A new feature compared to \eqref{laststep} is the presence of $(u_i+u_i^{-1})^{2\kappa-2\delta}$ which bounds the factor $u_i^{(\mu_{i+1}+\mu_{n-i}-\mu_1-\mu_n)/2}$ in \eqref{recursion}. At any rate, the integral in \eqref{laststepbis} splits and converges, hence \eqref{rewritebis2} follows as desired.

\section{Pointwise bounds for Maa{\ss} cusp forms}\label{section3}

Let $\phi$ be an $L^2$-normalized Maa{\ss} cusp form on $X_n$ as in Theorems~\ref{thm2} and \ref{thm3}
with archimedean Langlands parameters $(\mu_1,\dots,\mu_n)$ ordered as in \eqref{mueq2}. Let $z=xy\in\cH_n$
be a point in the Siegel set \eqref{siegel}, where $x:=(x_{ij})\in\U_n(\RR)$ and
\begin{equation}\label{ydef}
y:=\diag(y_1y_2\dots y_{n-1},\dots,y_1y_2,y_1,1).
\end{equation}
It will be convenient for us to also write $y$ as $\diag(t_1,\dots,t_n)$, so that $t_n=1$ and $t_i/t_{i+1}=y_{n-i}$. For later reference, we record that the dual form
\begin{equation}\label{dualform}
\tilde\phi(z):=\phi\left((z^{-1})^t\right),\qquad z\in\cH_n,
\end{equation}
is an $L^2$-normalized Maa{\ss} cusp form on $X_n$ with (archimedean) Langlands parameters $(-\mu_n,\dots,-\mu_1)$, or alternatively $(\ov{\mu_n},\dots,\ov{\mu_1})$, ordered as in \eqref{mueq2} (cf.\ \eqref{mueq5}). To verify this, combine \cite[Prop.~9.2.1]{G} with \eqref{nueq}.

\subsection{Applying the Fourier--Whittaker expansion}\label{sub1} As a preparation for the proof of Theorem~\ref{thm2}, we examine first the special case when $\phi$ is a Hecke eigenform on $X_n$. As $\phi$ is an even Hecke--Maa{\ss} form (cf.\ \cite[Prop.~9.2.5~\&~9.2.6]{G}), we can and we shall renormalize it (i.e.\ scale it by a positive number) so that its Fourier--Whittaker expansion reads (cf.\ \cite[Thm.~9.3.11]{G})
\begin{equation}\label{arithmetic}
\phi(z)=\sum_\pm\sum_{m_1,\dots,m_{n-1}\geq 1}\frac{\lambda_{\phi}(m_1,\dots,m_{n-1})}{\prod_{i=1}^{n-1}m_i^{i(n-i)/2}}
\sum_{\gamma\in\U_{n-1}(\ZZ)\bs\SL_{n-1}(\ZZ)}\cW_\mu^\pm\left(m\begin{pmatrix}\gamma & \\ & 1\end{pmatrix}z\right),
\end{equation}
where $m$ abbreviates the diagonal matrix
\begin{equation}\label{mdef}
m:=\diag(m_1m_2\dots m_{n-1},\dots,m_1m_2,m_1,1),
\end{equation}
$\lambda_{\phi}(m_1,\dots,m_{n-1})\in\CC$ are the Hecke eigenvalues, and the functions $\cW_\mu^\pm$ have the same absolute value as $\cW_\mu$ considered earlier. In particular, $\lambda_{\phi}(1,\dots,1)=1$. Applying the Cauchy--Schwarz inequality, we obtain for any $\eps>0$,
\begin{equation}\label{phibound1}
\begin{split}
|\phi(z)|^2\ll & \left(\sum_{m_1,\dots,m_{n-1}\geq 1}\frac{|\lambda_{\phi}(m_1,\dots,m_{n-1})|^2}{\prod_{i=1}^{n-1}m_i^{(n-i)(1+\eps)}}\right) \\ \times&\left(\sum_{m_1,\dots,m_{n-1}\geq 1}
\frac{1}{\prod_{i=1}^{n-1}m_i^{(n-i)(i-1-\eps)}}\left(\sum_{\gamma\in\U_{n-1}(\ZZ)\bs\SL_{n-1}(\ZZ)}\left|\cW_\mu\left(m\begin{pmatrix}\gamma & \\ & 1\end{pmatrix}z\right)\right|\right)^2\right).
\end{split}
\end{equation}
Using \cite[Def.~12.1.2]{G} and \cite[Thm.~2]{L}, we can bound the first factor as
\begin{equation}\label{RSbound}
\sum_{m_1,\dots,m_{n-1}\geq 1}\frac{|\lambda_{\phi}(m_1,\dots,m_{n-1})|^2}{\prod_{i=1}^{n-1}m_i^{(n-i)(1+\eps)}}
=\frac{L(1+\eps,\phi\times\tilde\phi)}{\zeta(n+n\eps)}\ll_{n,\eps} T_\mu^\eps,
\end{equation}
so we focus on the second factor.

The size of the Jacquet--Whittaker function in \eqref{phibound1} depends on $m$ and the diagonal Iwasawa coordinates of $\begin{pmatrix}\gamma & \\ & 1\end{pmatrix}z$. In order to control this size and also the number of $\gamma$'s corresponding to a given $m$ and a given size range, we denote by $z'$ the upper left $(n-1)\times(n-1)$ block of $z$ and record the Iwasawa decomposition
\begin{equation}\label{iwasawa}
\gamma z'=usk\qquad\text{with}\qquad u\in\U_{n-1}(\RR),\ \ \text{$s=\diag(s_1,\dots,s_{n-1})$ and $s_i>0$},\ \ k\in\O_{n-1}(\RR).
\end{equation}
For notational convenience, we also set
\begin{equation}\label{sn}
s_n:=1\qquad\text{and}\qquad S_n:=1.
\end{equation}
We can and we shall choose representatives $\gamma\in\SL_{n-1}(\ZZ)$ such that the entries of the unipotent part $u=(u_{ij})$ satisfy $|u_{ij}|\leq 1/2$ for $j>i$. Then in \eqref{phibound1} we have the Iwasawa decomposition
\[\begin{pmatrix}\gamma & \\ & 1\end{pmatrix}z=\begin{pmatrix}\gamma z' & \ast \\ & 1 \end{pmatrix}=
\begin{pmatrix} u & \ast \\ & 1\end{pmatrix}\begin{pmatrix} s & \\ & 1 \end{pmatrix}\begin{pmatrix} k & \\ & 1 \end{pmatrix},\]
hence by \eqref{Wbound} we obtain in the tempered case \eqref{mueq1}
\begin{equation}\label{Wbound2}
\cW_\mu\left(m\begin{pmatrix}\gamma & \\ & 1\end{pmatrix}z\right)
\ll_{n,\eps}C_{\mu,\eps}\left(\prod_{i=1}^{n-1}m_{n-i}^{i(n-i)}s_i^{n+1-2i}\right)^{1/2-\eps}
\exp\left(-\frac{1}{T_\mu}\sum_{i=1}^{n-1}m_{n-i}\frac{s_i}{s_{i+1}}\right),
\end{equation}
and by \eqref{Wboundbis} we obtain in the non-tempered case \eqref{mueq6}--\eqref{mueq5}
\begin{equation}\label{Wbound2bis}
\cW_\mu\left(m\begin{pmatrix}\gamma & \\ & 1\end{pmatrix}z\right)
\ll_n\tilde C_{\mu,1/2}\,\left|\prod_{i=1}^{n-1}(m_1\dots m_{n-i} s_i)^{\mu_i+\mu_{n+1-i}}\right|^{1/2}
\exp\left(-\frac{1}{T_\mu}\sum_{i=1}^{n-1}m_{n-i}\frac{s_i}{s_{i+1}}\right).
\end{equation}

This motivates the following definition. For an arbitrary $T\in\{T_\mu,2T_\mu,3T_\mu,\dots\}$ (a positive multiple of $T_\mu$), a diagonal matrix $m$ as in \eqref{mdef}, and a diagonal matrix
\[S:=\diag(S_1,\dots,S_{n-1})\]
whose entries are powers of $2$ with integer exponents (including negative integer exponents), we denote by
$\cB(T,m,S)$ the set of $\gamma\in\SL_{n-1}(\ZZ)$ such that in \eqref{iwasawa} we have (with the convention \eqref{sn})
\begin{equation}\label{block1}
|u_{ij}|\leq 1/2\ \ \text{for}\ \ j>i,\qquad T-T_\mu<\sum_{i=1}^{n-1}m_{n-i}\frac{s_i}{s_{i+1}}\leq T,\qquad S_i/2<s_i\leq S_i.
\end{equation}
By \eqref{siegel} and \eqref{Sproduct} below, $\cB(T,m,S)$ is empty unless $S_{n-1}S_{n-2}\dots S_i\gg_n 1$ holds for all $i$, hence we shall impose this restriction from now on. The bounds in \eqref{block1} now imply
\begin{equation}\label{block2}
\frac{S_i}{S_{i+1}}<\frac{2s_i}{s_{i+1}}\leq\frac{2T}{m_{n-i}}\qquad\text{and}\qquad T^{-(n-i)(n-i-1)/2}\ll_n S_i\ll_n \frac{T^{n-i}}{m_1\dots m_{n-i}},
\end{equation}
so that in particular $m_1\dots m_{n-1}\ll_n T^{n(n-1)/2}$. It follows that, for a given $T\in\{T_\mu,2T_\mu,3T_\mu,\dots\}$, the number of relevant dyadic diagonal matrices $S$ is $\ll_{n,\eps}T^\eps$, while for any $\gamma\in\cB(T,m,S)$ we have in \eqref{Wbound2}
\begin{equation}\label{inWbound2}
\prod_{i=1}^{n-1}m_{n-i}^{i(n-i)}s_i^{n+1-2i}=\prod_{i=1}^{n-1}\left(m_{n-i}\frac{s_i}{s_{i+1}}\right)^{i(n-i)}
<\prod_{i=1}^{n-1}T^{i(n-i)}=T^{(n^3-n)/6},
\end{equation}
and similarly we have in \eqref{Wbound2bis}
\begin{equation}\label{inWbound2bis}
\prod_{i=1}^{n-1}(m_1\dots m_{n-i} s_i)^{\mu_i+\mu_{n+1-i}}\ll_n\prod_{i=1}^{n-1}T^{(n-i)(n-i+1)/2}=T^{(n^3-n)/6}.
\end{equation}

By \eqref{phibound1}--\eqref{RSbound}, \eqref{Wbound2}--\eqref{Wbound2bis}, \eqref{inWbound2}--\eqref{inWbound2bis} we infer
\begin{equation}\label{phibound2}
|\phi(z)|^2\ll_{n,\eps}C^2\, \sum_{m_1,\ldots,m_{n-1}\geq 1} \frac{1}{\prod_{i=1}^n m_i^{(n-i)(i-1-\eps)}} \left(\sum_T e^{-T/T_{\mu}} T^{(n^3-n)/12+\eps} \max_S \#\cB(T,m,S) \right)^2,
\end{equation}
where $C$ denotes $C_{\mu,\eps}$ or $\tilde C_{\mu,1/2}$ depending on whether we are in the tempered case \eqref{mueq1} or in the non-tempered case \eqref{mueq6}--\eqref{mueq5}. In the next subsection, we shall estimate $\#\cB(T,m,S)$ by the geometry of numbers.

\subsection{Geometry of numbers} Let us consider the lattice $\Lambda\subset\RR^{n-1}$ spanned by the rows of $z'$. By \eqref{siegel}, the rows of $z'$ constitute a reduced basis of $\Lambda$ in the sense of (1.4)--(1.5) in \cite{LLL}, hence by (1.7) and (1.12) in the same paper, the $i$-th successive minimum $\lambda_i$ of $\Lambda$ is of size
\begin{equation}\label{successive}
\lambda_i\asymp_n t_{n-i}=y_1\dots y_i,\qquad 1\leq i\leq n-1.
\end{equation}
See also the Remark after \cite[(1.13)]{LLL} for a related comment. Now, for a given $\gamma\in\cB(T,m,S)$, the rows of $\gamma z'$ constitute an alternative basis of $\Lambda$. We can localize these rows recursively in terms of $S$, by combining \eqref{iwasawa} with the first and last part of \eqref{block1}. Indeed, if $\gamma_i$ (resp.\ $v_i$) denotes the $i$-th row of $\gamma$ (resp.\ $sk$), then $u$ is the coordinate matrix of the basis $(\gamma_1z',\dots\gamma_{n-1}z')$ when expressed in the orthogonal basis $(v_1,\dots,v_{n-1})$, and in the latter basis $v_i$ has length $s_i$. For each $i$, the tail $(\gamma_i z',\dots,\gamma_{n-1}z')$ generates an $(n-i)$-dimensional sublattice of $\Lambda$ with covolume $s_{n-1}s_{n-2}\dots s_i$ and successive minima \emph{at least} the corresponding minima of $\Lambda$, hence combining \eqref{block1}, \eqref{successive}, and a theorem of Minkowski~\cite[Thm.~3 on p.~124]{GL}, we infer
\begin{equation}\label{Sproduct}
S_{n-1}S_{n-2}\dots S_i\gg_n t_{n-1}t_{n-2}\dots t_i=y_1^{n-i}y_2^{n-i-1}\dots y_{n-i}.
\end{equation}
Moreover, for each $i$, we have an orthogonal decomposition $\gamma_i z'=v_i+\sum_{j>i}u_{ij}v_j$, hence $v_{n-1}$, $v_{n-2}$, etc. can be obtained recursively (in this order) by a Gram--Schmidt process from $\gamma_{n-1}z'$, $\gamma_{n-2}z'$, etc. In particular, the tail $(\gamma_{i+1}z',\dots,\gamma_{n-1}z')$ determines $(v_{i+1},\dots,v_{n-1})$, and the lattice vector $\gamma_i z'\in\Lambda$ lies in the following orthogonal Minkowski sum depending only on $S_i$ and the tail:
\[\left\{v\in\langle v_{i+1},\dots, v_{n-1}\rangle^\perp:\ \|v\|\leq S_i\right\}+[-1,1]v_{i+1}+\cdots+[-1,1]v_{n-1}.\]
For convenience, we cover this convex body by the Minkowski sum of $n-1$ pairwise orthogonal line segments centered at the origin with radii (half-lengths)
\[\overbrace{S_i, S_i, \dots, S_i}^\text{$i$ times}, S_{i+1}, \dots, S_{n-1}.\]

We can now bound the number of possible $\gamma$'s by bounding $\#\gamma_{n-1}$, $\#\gamma_{n-2}$, etc., $\#\gamma_2$, in this order, keeping in mind that the first row $\gamma_1$ is uniquely determined by the other rows (as follows from the first part of \eqref{block1}). Combining Lemma~\ref{lemma1} below with \eqref{successive}, we see that for $2\leq i\leq n-1$ and for given $(\gamma_{i+1},\dots,\gamma_{n-1})$, the number of possibilities for $\gamma_i$ is
\begin{equation}\label{gammaibound}
\#\gamma_i\ll_n\sum_{n-i\leq d\leq n-1}
\frac{\sigma_d(S_i, S_i, \dots, S_i, S_{i+1}, \dots, S_{n-1})}{t_{n-1}\dots t_{n-d}},
\end{equation}
where $\sigma_d$ is the $d$-th symmetric polynomial in $n-1$ variables:
\[\sigma_d(X_1,\dots,X_{n-1}):=\sum_{1\leq i_1<\dots<i_d\leq n-1}X_{i_1}\dots X_{i_d}.\]
Estimating the terms in \eqref{gammaibound} somewhat crudely via \eqref{siegel} and \eqref{block2}, we obtain
\[\#\gamma_i\ll_n\frac{\sigma_{n-1}(T^{n-i}, T^{n-i}, \dots, T^{n-i}, T^{n-i-1}, \dots, T)}{m_1^{n-i}t_{n-1}\dots t_i}=
\frac{T^{(n^2-n-i^2+i)/2}}{m_1^{n-i}t_{n-1}\dots t_i},\]
and finally
\begin{equation}\label{blockbound}
\#\cB(T,m,S)\ll_n\prod_{i=2}^{n-1}\frac{T^{(n^2-n-i^2+i)/2}}{m_1^{n-i}t_{n-1}\dots t_i}=
\frac{T^{n(n-1)(n-2)/3}}{m_1^{(n-1)(n-2)/2}\prod_{j=2}^{n-1}t_j^{j-1}}.
\end{equation}
We stress that the left hand side vanishes unless \eqref{block2} and \eqref{Sproduct} hold for all $i$.

We end this subsection with a general lemma concerning the number of lattice points contained in an orthotope: a parallelotope whose edges are pairwise orthogonal. It is rather standard, but the way we formulate it and prove it might be of some interest.

\begin{lemma}\label{lemma1} Let $\Lambda\subset\RR^m$ be a lattice with successive minima $\lambda_1\leq\dots\leq\lambda_m$. Let $K\subset\RR^m$ be an orthotope symmetric about the origin. Assume that the linear span of $\Lambda\cap K$ has dimension $1\leq d\leq m$, and denote by $V_d(K)$ the maximum of the $d$-volumes of the $d$-dimensional faces of $K$. Then we have
\[\#(\Lambda\cap K)\ll_d\frac{V_d(K)}{\lambda_1\dots\lambda_d}.\]
\end{lemma}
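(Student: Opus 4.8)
The plan is to pass to the rank-$d$ lattice $\Lambda_W:=\Lambda\cap W$ inside the subspace $W$ spanned by $\Lambda\cap K$, turn the count into a volume estimate there by Minkowski's two theorems on successive minima, and finally control the volume of the slice $K\cap W$ by a coordinate parallelepiped using the Cauchy--Binet identity.

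First I would normalize. After an orthogonal change of coordinates -- which affects neither $\#(\Lambda\cap K)$, nor the successive minima $\lambda_i$, nor the face volumes -- I may assume $K=\prod_{i=1}^m[-a_i,a_i]$ with $a_1\geq\dots\geq a_m$, so that $V_d(K)=2^d a_1\cdots a_d$. Let $W$ be the $\RR$-span of $\Lambda\cap K$ (so $\dim W=d$) and $\Lambda_W:=\Lambda\cap W$. Since every point of $\Lambda\cap K$ lies in $W$, we have $\Lambda\cap K=\Lambda_W\cap C$ with $C:=K\cap W$, a symmetric convex body in $W$ whose lattice points span $W$. As linearly independent vectors of $\Lambda_W$ remain linearly independent in $\Lambda$, the successive minima $\mu_1\leq\dots\leq\mu_d$ of $\Lambda_W$ satisfy $\mu_j\geq\lambda_j$, whence Minkowski's second theorem gives $\det\Lambda_W\gg_d\mu_1\cdots\mu_d\geq\lambda_1\cdots\lambda_d$.

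Because $\Lambda_W\cap C$ spans $W$, all successive minima of $\Lambda_W$ relative to $C$ are $\leq 1$, and the classical estimate $\#(L\cap C)\ll_d\vol_d(C)/\det L$ for a full-rank lattice $L$ whose intersection with $C$ spans (Minkowski's second theorem together with an elementary box count, cf.\ \cite{GL}) yields $\#(\Lambda\cap K)=\#(\Lambda_W\cap C)\ll_d\vol_d(C)/\det\Lambda_W$. It remains to bound $\vol_d(K\cap W)$; crucially one cannot simply bound this by $V_d(K)$, since a central $d$-slice of an orthotope can be strictly larger in volume than any of its $d$-faces, so the lattice must re-enter. Parametrize $W$ by a linear isometry $A\colon\RR^d\to W$, and for a $d$-element set $I\subset\{1,\dots,m\}$ let $A_I$ be the submatrix of $A$ on the rows indexed by $I$ and $\pi_I\colon\RR^m\to\RR^{I}$ the corresponding coordinate projection. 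When $\det A_I\neq 0$, the inclusion $A^{-1}(K\cap W)\subseteq A_I^{-1}(\prod_{i\in I}[-a_i,a_i])$ gives $\vol_d(K\cap W)\leq 2^d\prod_{i\in I}a_i/|\det A_I|$; as $\pi_I(\Lambda_W)$ is then a full-rank lattice in $\RR^{I}$ of covolume $|\det A_I|\det\Lambda_W$, this reads $\vol_d(K\cap W)/\det\Lambda_W\leq 2^d\prod_{i\in I}a_i/\det\pi_I(\Lambda_W)$. Choosing $I$ with $|\det A_I|$ maximal, the Cauchy--Binet identity $\sum_{|I|=d}(\det A_I)^2=\det(A^tA)=1$ forces $|\det A_I|\gg_d 1$, while $\prod_{i\in I}a_i\leq a_1\cdots a_d$ by our ordering; combining with $\det\Lambda_W\gg_d\lambda_1\cdots\lambda_d$ from the previous paragraph gives
\[
\#(\Lambda\cap K)\ll_d\frac{\vol_d(K\cap W)}{\det\Lambda_W}\ll_d\frac{a_1\cdots a_d}{\lambda_1\cdots\lambda_d}\asymp_d\frac{V_d(K)}{\lambda_1\cdots\lambda_d}.
\]

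I expect the last step to be the main obstacle: a slice $K\cap W$ may genuinely exceed every face of $K$ in volume, and the point that must be exploited is that a strongly tilted $W$ forces $\Lambda_W$ to have a correspondingly large covolume, so that the two effects cancel; the Cauchy--Binet identity is exactly what encodes this trade-off quantitatively. One caveat: selecting $I$ as the maximal minor a priori lets the implied constant depend on $m$ through $\binom{m}{d}$, which is harmless here since $m\geq d$; a slightly more careful choice of $I$ keeps the dependence on $d$ alone.
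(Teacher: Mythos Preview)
Your argument is correct and follows essentially the same route as the paper's: restrict to $\Lambda':=\Lambda\cap W$ inside $W$, bound $\det\Lambda'\gg_d\lambda_1\cdots\lambda_d$ via Minkowski, turn the count into $\ll_d\vol_d(K\cap W)/\det\Lambda'$, and finally bound $\vol_d(K\cap W)$ using Cauchy--Binet. The only cosmetic difference is in this last step: the paper invokes the generalized Pythagorean identity $\vol_d(K\cap W)^2=\sum_{|I|=d}\vol_d(\pi_I(K\cap W))^2$ and bounds each projection by a $d$-face of $K$, whereas you select a single maximal minor $A_I$; either way the resulting constant is $\binom{m}{d}^{1/2}$. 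Your detour through $\det\pi_I(\Lambda_W)$ is not actually used and can be dropped.

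One correction to your caveat. You write that ``a slightly more careful choice of $I$ keeps the dependence on $d$ alone,'' but this is not possible: the $m$-dependence is intrinsic to the statement. Take $K=[-1,1]^m$, let $v_1=(1,\dots,1)/\sqrt m$, and let $\Lambda$ be generated by $v_1$ together with $10e_2,\dots,10e_m$. Then $\lambda_1=1$, the span $W$ of $\Lambda\cap K$ is the line $\RR v_1$ (so $d=1$), $V_1(K)=2$, yet $\#(\Lambda\cap K)=2\lfloor\sqrt m\rfloor+1\asymp\sqrt m$. So the lemma genuinely requires an implied constant depending on $m$, not just on $d$; the paper's own proof (despite the $\ll_d$ in the statement) also incurs this $\binom{m}{d}^{1/2}$. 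This is harmless in the application, where $m=n-1$ and all constants already depend on $n$.
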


\begin{proof} Without loss of generality, the edges of $K$ are parallel to the standard coordinate axes of $\RR^m$. Let $W$ denote the $d$-dimensional subspace spanned by $\Lambda\cap K$. In this subspace, consider the lattice $\Lambda':=\Lambda\cap W$ and the convex body $K':=K\cap W$. Then $\Lambda'$ has a fundamental parallelotope $P$ lying in $d^2 K'$ by an observation of Mahler~\cite[p.~68]{GL}, and its $d$-volume satisfies
$\vol_d(P)\gg_d\lambda_1\dots\lambda_d$ by a theorem of Minkowski~\cite[Thm.~3 on p.~124]{GL}. The translates of $P$ by the elements of $\Lambda'\cap K'$ are pairwise disjoint and lie in $(d^2+1)K'$, therefore
\[\#(\Lambda\cap K)=\#(\Lambda'\cap K')\leq\frac{\vol_d((d^2+1)K')}{\vol_d(P)}\ll_d\frac{\vol_d(K')}{\lambda_1\dots\lambda_d}.\]
By a generalized Pythagorean theorem (see e.g.\ \cite{CB}), the square of $\vol_d(K')$ equals the sum of the squares of the $d$-volumes of the $\binom{m}{d}$ orthogonal projections of $K'$ to the $d$-dimensional coordinate subspaces of $\RR^m$. Hence $\vol_d(K')\ll_d V_d(K)$, and the result follows.
\end{proof}

\subsection{Concluding Theorem~\ref{thm2}}\label{sub2} We are still examining the special case when $\phi$ is a Hecke--Maa{\ss} cusp form on $X_n$, renormalized to satisfy \eqref{arithmetic}. The general case of an arbitrary $L^2$-normalized Maa{\ss} cusp form will be reduced to this special case at the end of this subsection.

Let us first assume that we are in the tempered case \eqref{mueq1}. By \eqref{block2} and \eqref{Sproduct}, we can restrict in \eqref{phibound2} the sum over $T\in\{T_\mu,2T_\mu,3T_\mu,\dots\}$ to $T\geq 3c_n T(z)$, where $c_n>0$ is a suitable constant, and
\[T(z):=\max_{1\leq j\leq n-1}\left(\prod_{i=1}^j y_i^{j-i+1}\right)^{\frac{2}{j(j+1)}}.\]
For the same reason, we can further restrict to $T$ satisfying $T^{n(n-1)/2}\gg_n m_1\ldots m_{n-1}$, and insert in the $T$-sum the redundant factor
\[\left(\frac{T^{n(n-1)/2}}{m_1\ldots m_{n-1}}\right)^{2n\eps} \gg_{n,\eps} 1.\]
Then, invoking \eqref{blockbound}, we obtain the uniform bound
\begin{align}\label{phibound3}
\phi(z)&\ll_{n,\eps} C_{\mu,\eps}\,\frac{T(z)^{n(n-1)(5n-7)/12+\eps}}{\prod_{i=1}^{n-1}y_i^{(n-i)(n-i-1)/2}} \exp\left(-3c_n\frac{T(z)}{T_{\mu}}\right),\qquad n\geq 3,\notag\\
\intertext{hence also}
\phi(z)&\ll_{n,\eps} C_{\mu,\eps}\,\frac{T_\mu^{n(n-1)(5n-7)/12+\eps}}{\prod_{i=1}^{n-1}y_i^{(n-i)(n-i-1)/2}} \exp\left(-2c_n\frac{T(z)}{T_{\mu}}\right)
,\qquad n\geq 3.
\end{align}
The restriction to $n\geq 3$ guarantees that the resulting sum over $m_1,\dots,m_{n-1}\geq 1$ converges.
In the case of $n=2$, the exponent of $m_1$ in \eqref{blockbound} vanishes, hence we insert an additional factor of $(T/m_1)^{1/2}\gg 1$ to achieve convergence, and we conclude that the above bound holds with an additional factor of $T_\mu^{1/2}$.

We can derive an alternative version of \eqref{phibound3} with the help of the dual form introduced in \eqref{dualform}. We express $\phi(z)$ as $\tilde\phi(\tilde z)$, where
\[\tilde z:=(y_1\dots y_{n-1})\cdot w(z^{-1})^t w\]
has the Iwasawa decomposition (cf.\ \cite[(9.2.2)]{G})
\[\tilde z=\tilde x\tilde y\ \ \text{with}\ \ \tilde x\in\U_n(\RR)\ \ \text{and}\ \
\tilde y:=\diag(y_{n-1}y_{n-2}\dots y_1,\dots,y_{n-1}y_{n-2},y_{n-1},1).\]
Applying \eqref{phibound3} to $\tilde\phi$ (resp.\ $\tilde z$) in place of $\phi$ (resp.\ $z$), we infer
\begin{equation}\label{phibound4}
\phi(z)\ll_{n,\eps}C_{\mu,\eps}\,\frac{T_\mu^{n(n-1)(5n-7)/12+\eps}}{\prod_{i=1}^{n-1}y_i^{i(i-1)/2}}
\exp\left(-2c_n\frac{T(\tilde z)}{T_{\mu}}\right),\qquad n\geq 3,
\end{equation}
and the same for $n=2$ with an additional factor of $T_\mu^{1/2}$.

The quantities on the left hand sides of \eqref{phibound3}--\eqref{phibound4} are the same, hence multiplying the two inequalities and then taking the square root, we obtain by \eqref{Tdef} and \eqref{cYdef},
\begin{equation}\label{phibound6}
\phi(z)\ll_{n,\eps}C_{\mu,\eps}\,\frac{\lambda_\phi^{n(n-1)(5n-7)/24+\eps}}{\prod_{i=1}^{n-1}y_i^{n(n-1)/4-i(n-i)/2}}
\exp\left(-c_n\frac{\cY(z)}{T_{\mu}}\right),\qquad n\geq 3,
\end{equation}
and the same for $n=2$ with an additional factor of $\lambda_\phi^{1/4}$. Combining Rankin--Selberg theory with \eqref{normalization} and Brumley's lower bound \cite[Thm.~3]{Br} (see also \cite[Appendix]{La}), we see that
\[\|\phi\|_2^2\asymp_n L(1,\phi,\Ad)\gg_{n,\eps}\lambda_\phi^{-(n-1)^2/4-\eps},\]
hence we conclude
\[\frac{\phi(z)}{\|\phi\|_2}\ll_{n,\eps}C_{\mu,\eps}\,\frac{\lambda_\phi^{(n-1)(5n^2-4n-3)/24+\eps}}{\prod_{i=1}^{n-1}y_i^{n(n-1)/4-i(n-i)/2}}
\exp\left(-c_n\frac{\cY(z)}{T_{\mu}}\right),\qquad n\geq 3.\]
In the case of $n=2$, the same holds with an additional factor of $\lambda_\phi^{1/8}$ (instead of $\lambda_\phi^{1/4}$), because in this case we have $L(1,\phi,\Ad)\gg_\eps\lambda_\phi^{-\eps}$ by the celebrated result of Hoffstein and Lockhart~\cite[Thm.~0.2]{HL}.

We have similar but slightly weaker results in the non-tempered case \eqref{mueq6}--\eqref{mueq5}. As before, we obtain \eqref{phibound6} but with $\tilde C_{\mu,1/2}$ in place of $C_{\mu,\eps}$. In connection with Rankin--Selberg theory, we use the following extension of \eqref{normalization}, which is a consequence of \eqref{Wdef2} and \eqref{normalization2}:
\[\int_{(\RR_{>0})^{n-1}}\left|\cW_\mu(\diag(t_1,\dots,t_{n-1},1))\right|^2\,\prod_{i=1}^{n-1}\frac{dt_i}{t_i^{n+1-2i}}=
\frac{2^{1-n}}{\Gamma_\RR(n)}\prod_{j<k}\frac{\Gamma_\RR(1-\mu_j+\mu_k)}{\Gamma_\RR(1+\ov{\mu_j}-\ov{\mu_k})}.\]
Then, by Brumley's lower bound \cite[Thm.~3]{Br} (see also \cite[Appendix]{La}) and Stirling's formula, we obtain
\[\|\phi\|_2^2\asymp_n L(1,\phi,\Ad)\prod_{j<k}\frac{\Gamma_\RR(1-\mu_j+\mu_k)}{\Gamma_\RR(1+\ov{\mu_j}-\ov{\mu_k})}
\gg_{n,\eps}\lambda_\phi^{-(n-1)^2/4-\eps}\lambda_\phi^{-n(n-1)/4},\]
hence we conclude
\begin{equation}\label{phibound7}
\frac{\phi(z)}{\|\phi\|_2}\ll_{n,\eps}\tilde C_{\mu,1/2}\,\frac{\lambda_\phi^{(n-1)(5n^2-n-3)/24+\eps}}{\prod_{i=1}^{n-1}y_i^{n(n-1)/4-i(n-i)/2}}
\exp\left(-c_n\frac{\cY(z)}{T_{\mu}}\right),\qquad n\geq 3.
\end{equation}
In the case of $n=2$, we are automatically in the tempered case \eqref{mueq1}, so our earlier discussion applies.

Finally, let $\phi$ be an arbitrary $L^2$-normalized Maa{\ss} cusp form on $X_n$ as in Theorem~\ref{thm2}. Decomposing $\phi$ into pairwise orthogonal Hecke eigenforms, we see that $\phi(z)$ can be bounded by the right hand side of \eqref{phibound7} multiplied by the \emph{square-root} of the multiplicity of the Laplace eigenvalue $\lambda_\phi$. The multiplicity is $\ll_n\lambda_\phi^{(n+2)(n-1)/4}$
by Donnelly's theorem \cite[Thm.~1.1]{Do}, while $\tilde C_{\mu,1/2}\ll_n\lambda_\phi^{(n-1)n(n+4)/24}$ by Remark~\ref{remark4}, hence in the end we obtain (for $n=2$ as well)
\[\phi(z)\ll_n\frac{\lambda_\phi^{n^3/4}}{\prod_{i=1}^{n-1}y_i^{n(n-1)/4-i(n-i)/2}}
\exp\left(-c_n\frac{\cY(z)}{T_{\mu}}\right).\]
This is clearly stronger than \eqref{thm2bound}, hence the proof of Theorem~\ref{thm2} is complete.

\subsection{Applying Selberg's pre-trace formula}\label{sub3} Let $\phi$ be an $L^2$-normalized Maa{\ss} cusp form on $X_n$ as in Theorem~\ref{thm3}, and let $z\in\cH_n$
be a point in the Siegel set \eqref{siegel}. Note that the Langlands parameters \eqref{mueq4} are not necessarily purely imaginary, but they satisfy \eqref{mueq6}--\eqref{mueq5}. We shall establish \eqref{thm3bound} with the help of Selberg's pre-trace formula.  As a preparation, we denote by
\[\mathfrak{a}:=\{\diag(\alpha_1,\dots,\alpha_n)\in\M_n(\RR):\ \alpha_1+\dots+\alpha_n=0\}\]
the Lie algebra of the diagonal torus of $\PGL_n(\RR)$, and by
$C:\PGL_n(\RR)\to\mathfrak{a}/S_n$ the Cartan projection induced from the Cartan decomposition for $\GL_n(\RR)$:
\[g=h k_1\exp (C(g)) k_2\qquad\text{with}\qquad h\in\Z_n(\RR),\ k_1,k_2\in\O_n(\RR).\]
We identify the complexified dual $\mathfrak{a}^*_{\CC}$ with the set of vectors (cf.\ \eqref{mueq4})
\[\kappa=(\kappa_1,\dots,\kappa_n)\in\CC^n\qquad\text{satisfying}\qquad\kappa_1+\dots+\kappa_n=0,\]
namely such a vector acts on $\mathfrak{a}_{\CC}:=\mathfrak{a}\otimes_\RR\CC$ by the $\CC$-linear map
\[\kappa:\diag(\alpha_1,\dots,\alpha_n)\mapsto \kappa_1\alpha_1+\dots+\kappa_n\alpha_n.\]

Following \cite[Section~2]{BM} and \cite[Sections~2~\&~6]{BP}, we can construct a smooth, bi-$\O_n(\RR)$-invariant function
$f_{\mu}:\PGL_n(\RR)\to\CC$ supported in a fixed compact subset $\Omega$ (which is independent of $\mu\in\mathfrak{a}^*_{\CC}$) with the following properties. On the one hand, the function obeys (cf.\ \cite[(2.3)--(2.4)]{BM})
\begin{equation}\label{eq:inverse_spherical_transform_BP_bound}
f_{\mu}(g)\ll \lambda_{\phi}^{(n^2-n)/4}\left(1+\lambda_{\phi}^{1/2}\|C(g)\|\right)^{-1/2},\qquad g\in\PGL_n(\RR),
\end{equation}
where $\|\cdot\|$ stands for a fixed $S_n$-invariant norm on $\mathfrak{a}\cong\RR^{n-1}$. On the other hand, its spherical transform
$\tilde{f}_{\mu}:\mathfrak{a}^*_{\CC}/S_n \to\CC$, defined as in \cite[(17)~of~Section~II.3]{He} or \cite[(2.3)]{BP}, satisfies
\[\tilde{f}_{\mu}(\mu)\geq 1,\qquad \tilde{f}_{\mu}(\kappa)\geq 0\]
for all Langlands parameters $\kappa\in\mathfrak{a}^*_{\CC}$ occurring in $L^2(X_n)$, including possibly non-tempered parameters. Then, using positivity in Selberg's pre-trace formula (see \cite[(6.1)]{BP}), or more directly by a Mercer-type pre-trace inequality (cf.\ \cite[(3.15)]{BHMM}), we obtain
\begin{equation}\label{eq:pre-trace_inequality}
|\phi(z)|^2 \leq \sum_{\gamma\in\PGL_n(\ZZ)} f_\mu(z^{-1}\gamma z).
\end{equation}

\subsection{Counting matrices} In this subsection, all implied constants depend on $n$, $\Omega$, and for simplicity we do not indicate this dependence. We are interested in estimating the right hand side of \eqref{eq:pre-trace_inequality}, hence we restrict to $\gamma\in\PGL_n(\ZZ)$ such that $z^{-1}\gamma z\in\Omega$. For any such $\gamma$ (more precisely for any representative $\gamma\in\GL_n(\ZZ)$), we denote by $\gamma_{ij}$ (resp.\ $\Gamma_{ij}$) the entries of $\gamma$ (resp.\ $z^{-1}\gamma z$), so that $\gamma=(\gamma_{ij})$ and $z^{-1}\gamma z=(\Gamma_{ij})$. Then, we can generalize \cite[(33)]{BHM} as follows.

\begin{lemma}\label{lemma: entries of z(-1) gamma z} The entries of $z^{-1}\gamma z$ equal
\[\Gamma_{ij}:=(z^{-1}\gamma z)_{ij} = Y_{ij} \sum_{r=i}^n\sum_{s=1}^j c_{rs}^{(ij)}(x) \gamma_{rs},\]
where
\begin{equation}\label{Ydef}
Y_{ij}:=\begin{cases} y_{n-i+1}\dots y_{n-j},\qquad & \text{if $i>j$;}\\
(y_{n-j+1}\dots y_{n-i})^{-1},\qquad & \text{if $i<j$;}\\
1,\qquad & \text{if $i=j$.}\end{cases}\end{equation}
In this identity, the coefficients $c_{rs}^{(ij)}(x)$ only depend on the upper-triangular unipotent part $x$ of $z$. Moreover, they satisfy $c_{rs}^{(ij)}(x)\ll 1$ and $c_{ij}^{(ij)}(x) = 1$.
\end{lemma}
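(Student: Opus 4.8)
The plan is to work out the conjugation $z^{-1}\gamma z$ directly from the Iwasawa coordinates $z=xy$, extract the diagonal scaling $Y_{ij}$ coming from $y$, and absorb everything else into the coefficients $c_{rs}^{(ij)}(x)$.

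First I would write $z = xy$ with $x$ unipotent upper-triangular and $y = \diag(t_1,\dots,t_n)$ as in \eqref{ydef}, so that $t_i/t_{i+1} = y_{n-i}$. Then $z^{-1}\gamma z = y^{-1} x^{-1}\gamma x y$. Since $x$ and $x^{-1}$ are unipotent upper-triangular with entries depending only on the $x_{ij}$, the matrix $B := x^{-1}\gamma x$ has entries that are $\ZZ$-linear combinations $B_{ij} = \sum_{r,s} b_{rs}^{(ij)}(x)\,\gamma_{rs}$ of the entries of $\gamma$; moreover, because $x^{-1}$ is lower-unipotent only through its \emph{upper} part (being upper-triangular), the row index $r$ in the expansion of the $i$-th row of $x^{-1}\gamma$ runs over $r \geq i$, and the column index $s$ in the expansion of the $j$-th column of $(\cdot)x$ runs over $s \leq j$; this is exactly the summation range $r=i,\dots,n$ and $s=1,\dots,j$ claimed. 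The conjugation by $y$ then multiplies the $(i,j)$ entry by $t_i/t_j$, and a telescoping computation gives $t_i/t_j = Y_{ij}$ with $Y_{ij}$ as in \eqref{Ydef} (for $i>j$ it is $y_{n-i+1}\cdots y_{n-j}$, for $i<j$ its reciprocal, and $1$ on the diagonal). Setting $c_{rs}^{(ij)}(x) := b_{rs}^{(ij)}(x)$ yields the stated formula.

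Next I would verify the two properties of the coefficients. The bound $c_{rs}^{(ij)}(x) \ll 1$ follows because each $c_{rs}^{(ij)}(x)$ is a fixed polynomial (with integer coefficients) in the entries $x_{kl}$ with $k<l$, and by the Siegel condition \eqref{siegel} we have $|x_{kl}| \leq 1/2$; the number of monomials and their degrees are bounded in terms of $n$ only, so the whole expression is $O_n(1)$. For the normalization $c_{ij}^{(ij)}(x) = 1$: in the product $x^{-1}\gamma x$, the only way to produce $\gamma_{ij}$ with no accompanying $x$-factor is to take the diagonal entry $1$ from $x^{-1}$ in row $i$ and the diagonal entry $1$ from $x$ in column $j$, which contributes $\gamma_{ij}$ with coefficient exactly $1$; every other term contributing $\gamma_{ij}$ carries at least one off-diagonal $x_{kl}$ factor with $(k,l)\neq(r,s)$, hence does not alter the $(i,j)\mapsto(i,j)$ coefficient. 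This gives $c_{ij}^{(ij)}(x) = 1$, and the diagonal scaling $Y_{ii} = 1$ is consistent with $\Gamma_{ii}$ reducing to $\gamma_{ii}$ plus lower-order terms.

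The main obstacle is purely bookkeeping: getting the index ranges and the telescoping formula for $Y_{ij}$ exactly right, and being careful that the upper-triangularity of $x$ forces the one-sided ranges $r \geq i$ and $s \leq j$ rather than full sums. This is where \cite[(33)]{BHM} already did the $n=3$ case, and the general case is a routine but slightly tedious extension; I would present it by inducting on the structure of the unipotent factors or, more cleanly, by just multiplying out $y^{-1} x^{-1}\gamma x y$ entry-by-entry and reading off the three assertions. No genuine difficulty is expected beyond organizing the notation.
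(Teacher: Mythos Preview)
Your approach is essentially identical to the paper's: both write $z^{-1}\gamma z = y^{-1}(x^{-1}\gamma x)y$ and read off the three claims entrywise. The paper is slightly cleaner in that it gives the coefficients explicitly as $c_{rs}^{(ij)}(x) = (x^{-1})_{ir}\,x_{sj}$, from which $c_{rs}^{(ij)}\ll 1$ and $c_{ij}^{(ij)}=(x^{-1})_{ii}x_{jj}=1$ are immediate without any talk of polynomials or ``other terms contributing $\gamma_{ij}$''. One small slip: conjugation $B\mapsto y^{-1}By$ multiplies the $(i,j)$ entry by $t_j/t_i$, not $t_i/t_j$; you then also misidentify $t_i/t_j$ with $Y_{ij}$, and the two errors cancel to give the correct $Y_{ij}=t_j/t_i$.
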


\begin{proof}
Writing $z=xy$ with $x=(x_{ij})\in\U_n(\RR)$ and $y$ as in \eqref{ydef}, we have $z^{-1}\gamma z = y^{-1}(x^{-1}\gamma x)y$.
Since both $x$ and $x^{-1}$ are upper-triangular, the factor $x^{-1}\gamma x$ in the middle has entries
\[(x^{-1}\gamma x)_{ij} = \sum_{r=1}^n \sum_{s=1}^n x^{-1}_{ir} \gamma_{rs} x_{sj} = \sum_{r=i}^n\sum_{s=1}^j c_{rs}^{(ij)}(x) \gamma_{rs},\]
where $c_{rs}^{(ij)}:=x^{-1}_{ir}x_{sj}$, and $x^{-1}_{ir}$ stands for the $(i,r)$-entry of $x^{-1}$. Now $c_{rs}^{(ij)}(x)\ll 1$ follows
from $x^{-1}_{ir}\ll 1$ and $x_{sj}\ll 1$ (a consequence of \eqref{siegel}), while $c_{ij}^{(ij)}(x)=1$ follows from $x^{-1}_{ii}=1$ and $x_{jj}=1$.

By \eqref{ydef}, right-multiplication by $y$ multiplies the $j$-th column by $y_1\dots y_{n-j}$, and left-multiplication by $y^{-1}$ multiplies the $i$-th row by $(y_1\dots y_{n-i})^{-1}$. The result follows.
\end{proof}

For future reference, we record the following simple consequences of \eqref{Ydef} and \eqref{siegel}:
\begin{equation}\label{Yijproperty}
Y_{ij}\ll Y_{i+1,j}\qquad\text{and}\qquad Y_{ij}\ll Y_{i,j-1}.
\end{equation}
By Lemma~\ref{lemma: entries of z(-1) gamma z}, we have
\begin{equation}\label{eq:gamma_0}
\gamma_{ij}\ll\Gamma_{ij}Y_{ij}^{-1}+\underset{(r, s) \neq (i, j)}{\sum_{r=i}^n\sum_{s=1}^j} |\gamma_{rs}|.
\end{equation}
We claim that this implies
\begin{equation}\label{eq:gamma_1}
\gamma_{ij}\ll Y_{ij}^{-1}.
\end{equation}
Indeed, proceeding by induction on the difference $j-i$, we can assume that $\gamma_{rs}\ll Y_{rs}^{-1}$ holds whenever $s-r<j-i$. Then, combining \eqref{eq:gamma_0} with \eqref{Yijproperty} and the obvious bound $\Gamma_{ij}\ll 1$, we obtain \eqref{eq:gamma_1} readily:
\[\gamma_{ij}\ll\sum_{r=i}^n\sum_{s=1}^j Y_{rs}^{-1}\ll Y_{ij}^{-1}.\]

For $1\leq m\leq n-1$, we denote by $P_m\leq\PGL_n(\RR)$ the maximal parabolic subgroup consisting of the matrices with vanishing lower left $m\times(n-m)$ block. To ease working with this subgroup, we denote by $B_m$ the corresponding set of pairs $\{n-m+1,\dots,n\}\times\{1,\dots,n-m\}$. Further, for any $M\subseteq\{1,\dots,n-1\}$, we write
\[P_M^*:=\bigcap_{m\in M}P_m\cap\bigcap_{m\notin M}P_m^c\qquad\text{and}\qquad
B_M:=\bigcup_{m\in M}B_m,\]
where $P_m^c$ stands for the complement of $P_m$ in $\PGL_n(\RR)$.

\begin{lemma}\label{lemma:observation} Assume that $\gamma\in\PGL_n(\ZZ)$ satisfies $z^{-1}\gamma z\in\Omega$. Let $1\leq m\leq n-1$ be arbitrary.
\begin{itemize}
\item[(a)] If $\gamma\in P_m^c$, then $y_m\ll 1$ holds.
\item[(b)] If $\gamma\in P_m$, then $\Gamma_{ij}\ll\|C(z^{-1}\gamma z)\|$ holds for any $(j,i)\in B_m$.
\end{itemize}
\end{lemma}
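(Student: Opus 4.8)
The plan is to analyze the two parts separately, using the explicit formula for $\Gamma_{ij}$ from Lemma~\ref{lemma: entries of z(-1) gamma z} together with the crude bound \eqref{eq:gamma_1} and the fact that $z^{-1}\gamma z\in\Omega$ forces all entries $\Gamma_{ij}\ll 1$.

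For part (a): if $\gamma\in P_m^c$, then by definition some entry $\gamma_{rs}$ with $(r,s)$ in the lower left $m\times(n-m)$ block (i.e.\ $(r,s)\in B_m$) is nonzero; being an integer, $|\gamma_{rs}|\geq 1$. But from \eqref{eq:gamma_1} we have $\gamma_{rs}\ll Y_{rs}^{-1}$, so $Y_{rs}\ll 1$. Now I claim $y_m\ll Y_{rs}$ for every $(r,s)\in B_m$, which combined with the above gives $y_m\ll 1$. Indeed, for $(r,s)\in B_m$ we have $r\geq n-m+1$ and $s\leq n-m$, so by \eqref{Ydef} (the $i>j$ case) $Y_{rs}=y_{n-r+1}\cdots y_{n-s}$, a product of consecutive $y$'s whose index range $\{n-r+1,\dots,n-s\}$ contains $n-(n-m+1)+1=m$ through $n-(n-m)=m$ at minimum; more carefully, $n-r+1\leq m$ and $n-s\geq m$, so the factor $y_m$ appears in this product, and since all $y_i\geq\sqrt3/2\gg 1$ by \eqref{siegel}, we get $y_m\ll Y_{rs}\ll 1$.

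For part (b): suppose $\gamma\in P_m$, so $\gamma_{rs}=0$ whenever $(r,s)\in B_m$, i.e.\ $\gamma$ is block upper-triangular for the partition $(n-m,m)$. Fix $(j,i)\in B_m$, so $j\geq n-m+1$ and $i\leq n-m$; we want $\Gamma_{ji}\ll\|C(z^{-1}\gamma z)\|$. The Cartan projection measures the size of the matrix $z^{-1}\gamma z$ through its singular values, and the key point is that an entry below the block diagonal of $z^{-1}\gamma z$ is controlled by the smallest singular values relative to the largest. More precisely, since $\gamma\in P_m(\RR)\cap\GL_n(\ZZ)$, the conjugate $z^{-1}\gamma z$ lies in $z^{-1}P_m(\RR)z$; writing $g:=z^{-1}\gamma z$ with Cartan decomposition $g=k_1\exp(C(g))k_2$ (modulo center), the standard estimate is that for an entry $g_{ji}$ with $(j,i)$ in the lower-left $m\times(n-m)$ block, one has $|g_{ji}|\ll$ the operator norm of the projection of $g$ onto that off-diagonal block, which in turn is $\ll\|C(g)\|$ times the norm of $g$ — but here I should instead exploit the parabolic structure directly: conjugating the vanishing lower-left block of $\gamma$ by $z$ and using Lemma~\ref{lemma: entries of z(-1) gamma z} expresses $\Gamma_{ji}$ as $Y_{ji}\sum_{r\geq j,\ s\leq i} c_{rs}^{(ji)}(x)\gamma_{rs}$, where now every index pair $(r,s)$ in this sum has $r\geq j\geq n-m+1$ and $s\leq i\leq n-m$, hence lies in $B_m$, hence $\gamma_{rs}=0$; so naively $\Gamma_{ji}=0$. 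This is too strong — the issue is that the off-diagonal block of $z^{-1}\gamma z$ need not vanish because $x$ does not preserve $P_m$; so the correct argument is to split $z=xy$, note $y^{-1}\gamma y$ still lies in $P_m(\RR)$ (as $y$ is diagonal, hence in the Levi), so its lower-left block vanishes, and then conjugating by $x$ and $x^{-1}$ (unipotent upper-triangular) one picks up only contributions from entries weakly above, giving $\Gamma_{ji}\ll\max_{(r,s)}|\, (x^{-1}(y^{-1}\gamma y)x)_{rs}|$ over entries on or above the block diagonal — and these are precisely the entries governing $\exp(C(g))$ from above, so bounded by $\|C(g)\|$ up to the compact-support constraint $g\in\Omega$. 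The main obstacle I anticipate is part (b): making precise the passage from "$\gamma$ block-triangular" to "$\Gamma_{ji}\ll\|C(z^{-1}\gamma z)\|$" requires carefully tracking how conjugation by the unipotent $x$ redistributes the zero block, and invoking the correct linear-algebra fact relating entries of a matrix in $k_1\exp(C)k_2$-form to $\|C\|$ — likely via the observation that a matrix in $\Omega$ with a vanishing (conjugated) lower-left block has its complementary block of size $O(\|C(g)\|)$ because $g$ is close to the identity when $\|C(g)\|$ is small. I would lean on the analogue \cite[(33)]{BHM} and the compactness of $\Omega$ to absorb the harmless constants.
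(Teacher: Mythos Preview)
Your argument for part~(a) is correct and essentially identical to the paper's.

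Part~(b), however, has a genuine gap stemming from an index mix-up. The lemma asks you to bound $\Gamma_{ij}$ for $(j,i)\in B_m$, i.e.\ the entry in row $i\leq n-m$ and column $j\geq n-m+1$; this lies in the \emph{upper-right} $(n-m)\times m$ block of $g:=z^{-1}\gamma z$. You instead attempt to bound $\Gamma_{ji}$, the \emph{lower-left} block entry, and your computation that $\Gamma_{ji}=0$ via Lemma~\ref{lemma: entries of z(-1) gamma z} is actually correct --- not ``too strong'' as you feared. The matrix $x$ is unipotent upper-triangular, hence lies in every $P_m$, so conjugation by $z=xy$ does preserve $P_m$; thus $g\in P_m$ and its lower-left block genuinely vanishes. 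Your subsequent attempt to salvage the argument by splitting $z=xy$ and tracking how conjugation by $x$ ``redistributes the zero block'' goes nowhere because there is nothing to redistribute.

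What you are missing is the paper's trick for converting this vanishing into the desired bound on the \emph{transposed} block. Since $P_m$ is a group and $z\in P_m$, both $g$ and $g^{-1}$ lie in $P_m\cap\Omega^{\pm 1}$. On the compact set $\Omega$ one has
\[
\|C(g)\|\asymp\|g^tg-1_n\|\asymp\|g^t-g^{-1}\|.
\]
Now for $(j,i)\in B_m$ the $(j,i)$-entry of $g^{-1}$ vanishes (because $g^{-1}\in P_m$), so $(g^t)_{ji}=(g^t-g^{-1})_{ji}\ll\|C(g)\|$, which is exactly $\Gamma_{ij}\ll\|C(g)\|$. This is a two-line linear-algebra observation; no tracking of unipotent conjugation or appeal to \cite[(33)]{BHM} is needed.
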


\begin{proof} Let $\gamma\in P_m^c$. By definition, $\gamma_{ij}\neq 0$ holds for some $(i,j)\in B_m$. Therefore, $n-i+1\leq m\leq n-j$, and by \eqref{eq:gamma_1}, \eqref{Ydef}, \eqref{siegel}, we infer that
\[1\leq |\gamma_{ij}| \ll Y_{ij}^{-1} = (y_{n-i+1}\dots y_{n-j})^{-1} \ll y_m^{-1}.\]
Comparing the two sides, the bound $y_m\ll 1$ follows.

\medskip
\noindent
Let $\gamma\in P_m$. Writing $g:=z^{-1}\gamma z$, we see that $g\in P_m\cap\Omega$ and $g^{-1}\in P_m\cap\Omega^{-1}$, because $P_m$ is a subgroup containing the upper-triangular matrix $z\in\cH_n$. In addition, as $\Omega$ and $\Omega^{-1}$ are compact,
\[\|C(g)\|\asymp\|g^tg-1_n\|\asymp\|g^t-g^{-1}\|.\]
For any $(j,i)\in B_m$, the $(j,i)$-entry of $g^{-1}$ vanishes, hence $(g^t)_{ji}\ll\|C(g)\|$ follows from the previous display. In other words, we have $g_{ij}\ll\|C(g)\|$ in this case.
\end{proof}

For a subset $M\subseteq\{1,\dots,n-1\}$ and a dyadic parameter $\lambda_{\phi}^{-1/2}\leq K\ll 1$, let us examine the contribution to
\eqref{eq:pre-trace_inequality} of the matrices $\gamma\in\PGL_n(\ZZ)$ satisfying $\gamma\in P^*_M$ and $\|(C(z^{-1}\gamma z)\|\asymp K$. Using \eqref{eq:inverse_spherical_transform_BP_bound}, we obtain
\begin{equation}\label{eq:dyadically_decomposed}
\sum_{\substack{\gamma\in P^*_M\cap\PGL_n(\ZZ) \\K/2<\|(C(z^{-1}\gamma z)\|\leq K}} f_{\mu}(z^{-1}\gamma z) \ll
\lambda_{\phi}^{(n^2-n-1)/4} K^{-1/2}\sum_{\substack{\gamma \in P^*_M\cap \PGL_n(\ZZ) \\\|(C(z^{-1}\gamma z)\|\leq K}} 1.
\end{equation}
To bound the number of $\gamma$'s on the right hand side, we estimate the entries $\gamma_{ij}$ in terms of $M$ and $K$. Note that we have dropped the lower bound constraint for $\|(C(z^{-1}\gamma z)\|$ for convenience.

\emph{Case 1.} Let $i\geq j$. Then \eqref{eq:gamma_1}, \eqref{Ydef}, \eqref{siegel} imply $\gamma_{ij}\ll 1$.

\emph{Case 2.} Let $i<j$ and $(j,i)\notin B_M$. Then \eqref{eq:gamma_1} reads $\gamma_{ij}\ll y_{n-j+1}\dots y_{n-i}$. The interval $n-j+1\leq m\leq n-i$ contains no element from $M$, hence by Part~(a) of Lemma~\ref{lemma:observation}, each corresponding factor $y_m$ can be bounded as $y_m\ll 1$. Therefore, $\gamma_{ij}\ll 1$ as in the previous case.

\emph{Case 3.} Let $(j,i)\in B_M$. Then, $\Gamma_{ij}\ll K$ by Part~(b) of Lemma~\ref{lemma:observation}. We can use this information to strengthen \eqref{eq:gamma_1} to $\gamma_{ij}\ll 1+KY_{ij}^{-1}$. Indeed, proceeding by induction on the difference $j-i$, we can assume that $\gamma_{rs}\ll 1+KY_{rs}^{-1}$ holds whenever $(s,r)\in B_M$ and $s-r<j-i$. Taking into account Cases~1--2 above, $\gamma_{rs}\ll 1+KY_{rs}^{-1}$ holds whenever $s-r<j-i$. Then, combining \eqref{eq:gamma_0} with \eqref{Yijproperty} and the bound $\Gamma_{ij}\ll K$, we obtain the promised improvement of \eqref{eq:gamma_1}:
\[\gamma_{ij} \ll KY_{ij}^{-1} + \underset{(r, s) \neq (i, j)}{\sum_{r=i}^n \sum_{s=1}^j} |\gamma_{rs}| \ll \sum_{r=i}^n \sum_{s=1}^j (1 + KY_{rs}^{-1}) \ll 1 + KY_{ij}^{-1}.\]

In the end, we can bound the number of relevant $\gamma$'s on the right hand side of \eqref{eq:dyadically_decomposed} as
\begin{equation}\label{gammacount}
\sum_{\substack{\gamma \in P^*_M\cap \PGL_n(\ZZ) \\\|(C(z^{-1}\gamma z)\|\leq K}} 1\ll\prod_{(j,i)\in B_M} (1+K Y_{ij}^{-1}).
\end{equation}
The remaining contribution of $\|(C(z^{-1}\gamma z)\|\leq\lambda_\phi^{-1/2}$ can be handled similarly, namely \eqref{eq:inverse_spherical_transform_BP_bound} and \eqref{gammacount} for $K:=\lambda_\phi^{-1/2}$ yield readily
\begin{equation}\label{eq:remaining}
\sum_{\substack{\gamma\in P^*_M\cap\PGL_n(\ZZ) \\\|(C(z^{-1}\gamma z)\|\leq\lambda_\phi^{-1/2}}} f_{\mu}(z^{-1}\gamma z) \ll
\lambda_{\phi}^{(n^2-n)/4}\prod_{(j,i)\in B_M} (1+\lambda_\phi^{-1/2}Y_{ij}^{-1}).
\end{equation}

\subsection{Concluding Theorem~\ref{thm3}}\label{sub4} Summing up over all subsets $M\subseteq\{1,\dots,n-1\}$ and all dyadic parameters $\lambda_{\phi}^{-1/2}\leq K\ll 1$, we obtain by \eqref{siegel}, \eqref{eq:pre-trace_inequality}--\eqref{Ydef}, \eqref{eq:dyadically_decomposed}--\eqref{eq:remaining} the uniform bound
\[|\phi(z)|^2 \ll_n \lambda_{\phi}^{(n^2-n)/4} + \lambda_{\phi}^{(n^2-n-1)/4} \prod_{1\leq i<j\leq n} Y_{ij}^{-1}
=\lambda_{\phi}^{(n^2-n)/4} + \lambda_{\phi}^{(n^2-n-1)/4} \prod_{k=1}^{n-1} y_k^{k(n-k)},\]
which is equivalent to \eqref{thm3bound}. Concerning \eqref{thm4bound}, we need to prove that
\[\phi(z)\ll_{n,\eps}\lambda_\phi^{(n^2-2)(n+1)/16+\eps}\]
holds for any point $z\in\cH_n$ in the Siegel set \eqref{siegel}. If $y_1\dots y_{n-1}>\lambda_\phi^{(n-1)/2+\eps}$ holds for a given $\eps>0$, then \eqref{thm2bound} combined with \eqref{Tdef} and \eqref{cYbound} readily gives $\phi(z)\ll_{n,\eps}1$, with a lot to spare. On the other hand, if $y_1\dots y_{n-1}\leq\lambda_\phi^{(n-1)/2+\eps}$ holds, then \eqref{thm3bound} yields the claimed bound upon noting that $i(n-i)\leq n^2/4$:
\[\phi(z)\ll_{n,\eps}\lambda_\phi^{(n^2-n-1)/8+n^2(n-1)/16+\eps}=\lambda_\phi^{(n^2-2)(n+1)/16+\eps}.\]
The proof of Theorem~\ref{thm3} is complete.

\end{document}